\numberwithin{equation}{section}
\newtheorem{theorem}{Theorem}[section]
\newtheorem{lemma}[theorem]{Lemma}
\newtheorem{corollary}[theorem]{Corollary}
\theoremstyle{definition}
\newtheorem{definition}[theorem]{Definition}
\theoremstyle{remark}
\newtheorem{remark}[theorem]{Remark}
\def\kint_#1{\mathchoice%
         {\mathop{\kern 0.2em\vrule width 0.6em height 0.69678ex depth -0.58065ex
                 \kern -0.8em \intop}\nolimits_{\kern -0.4em#1}}%
         {\mathop{\kern 0.1em\vrule width 0.5em height 0.69678ex depth -0.60387ex
                 \kern -0.6em \intop}\nolimits_{#1}}%
         {\mathop{\kern 0.1em\vrule width 0.5em height 0.69678ex depth -0.60387ex
                 \kern -0.6em \intop}\nolimits_{#1}}%
         {\mathop{\kern 0.1em\vrule width 0.5em height 0.69678ex depth -0.60387ex
                 \kern -0.6em \intop}\nolimits_{#1}}}
\def\vintslides_#1{\mathchoice%
         {\mathop{\kern 0.1em\vrule width 0.5em height 0.697ex depth -0.581ex
                 \kern -0.6em \intop}\nolimits_{\kern -0.4em#1}}%
         {\mathop{\kern 0.1em\vrule width 0.3em height 0.697ex depth -0.604ex
                 \kern -0.4em \intop}\nolimits_{#1}}%
         {\mathop{\kern 0.1em\vrule width 0.3em height 0.697ex depth -0.604ex
                 \kern -0.4em \intop}\nolimits_{#1}}%
         {\mathop{\kern 0.1em\vrule width 0.3em height 0.697ex depth -0.604ex
                 \kern -0.4em \intop}\nolimits_{#1}}}
\newcommand{\vp}{\varphi}
\newcommand{\be}{\begin{equation}}
\newcommand{\ee}{\end{equation}}
\newcommand{\bes}{\begin{equation*}}
\newcommand{\ees}{\end{equation*}}
\newcommand{\R}{\mathbb{R}}
\newcommand{\Rn}{\mathbb{R}^d}
\newcommand{\norm}[1]{\left| #1 \right|}
\newcommand{\abs}[1]{\left| #1 \right|}
\newcommand{\esssup}{\operatornamewithlimits{ess\, sup}}
\newcommand{\essinf}{\operatornamewithlimits{ess\,inf}}
\newcommand{\essosc}{\operatornamewithlimits{ess\, osc}}
\newcommand{\divt}{\operatorname{div}}
\renewcommand{\div}{\nabla \cdot}
\renewcommand{\l}{\left}
\renewcommand{\r}{\right}
\newcommand{\parts}[2]{\frac{\partial {#1}}{\partial {#2}}}
\newcommand{\pdo}[2]{\frac{\partial #1}{\partial #2}}
\providecommand{\brc}[1]{\left\lbrace#1\right\rbrace}
\def\Xint#1{\mathchoice
{\XXint\displaystyle\textstyle{#1}}%
{\XXint\textstyle\scriptstyle{#1}}%
{\XXint\scriptstyle\scriptscriptstyle{#1}}%
{\XXint\scriptscriptstyle\scriptscriptstyle{#1}}%
\!\int}
\def\XXint#1#2#3{{\setbox0=\hbox{$#1{#2#3}{\int}$}
\vcenter{\hbox{$#2#3$}}\kern-.5\wd0}}
\def\dashint{\Xint-}
\title[Local H\"older continuity for DNPE]{Local H\"older continuity for some doubly nonlinear parabolic equations in measure spaces}
\author[Henriques and Laleoglu]{Eurica Henriques and Rojbin Laleoglu}
\begin{document}

\begin{abstract}

We establish the local H\"older continuity for the nonnegative weak solutions of certain doubly nonlinear parabolic equations possessing a singularity in the time derivative part and a degeneracy in the principal part. The proof involves the method of intrinsic scaling and the setting is a measure space equipped with a doubling non-trivial Borel measure supporting a Poincar\'e inequality.

\end{abstract}

\date{\today}

\keywords{H\"older continuity, singular PDE, degenerate PDE, intrinsic scaling}

\subjclass[2000]{Primary 35B65. Secondary 35K65, 35K67, 35D10}

\thanks{This research was funded by the Portuguese Government through the FCT, Funda\c c\~ao para a Ci\^encia e a Tecnologia, under the project PEst-OE/MAT/UI4080/2011 and projects UTAustin/MAT/0035/2008 and PTDC/MAT/098060/2008.}

\maketitle

\section{Introduction}

We consider the regularity question for the nonnegative weak solutions of the doubly nonlinear equation
\begin{equation}\label{equation}
\frac{\partial (u^{q})}{\partial t}-\div{(|\nabla u|^{p-2}\nabla u)}=0, \quad p>2 \quad \text{and} \quad 0<q<1
\end{equation}
which appears, together with some other similar ones, in the modeling of turbulent filtration of non-Newtonian fluids through a porous media (see~\cite{DiazThelin94}). We show that the nonnegative weak solutions are locally H\"older continuous in measure spaces assuming only the measure to be a doubling non-trivial Borel measure supporting a Poincar\'e inequality.

In the simpler case $q=1$, the equation becomes the heat equation for $p=2$ and the evolutionary $p$-Laplace equation for $1<p\neq2$. For the indicated ranges of $p$ and $q$, in addition to the double nonlinearity, this  equation possesses a degeneracy, coming from the fact that its modulus of ellipticity $|\nabla u|^{p-2}$ vanishes at points where $|\nabla u|=0$, and a singularity due to the term $u^{q-1}$, enclosed in the time derivative part, which blows up at points where $u=0$. The presence of the degeneracy and the singularity in~\eqref{equation} together with the available literature (see~\cite{DiBe93, DiBeUrbaVesp04, Urba08}), suggest the use of the method of intrinsic scaling to show that weak solutions are locally continuous. However, in order to deal simultaneously with the double degradation of the parabolic structure of \eqref{equation} one needs to consider several new aspects in the argument, (for other successful uses of intrinsic geometry, see also \cite{AcerMing07,BogeDuzaMing11,DiBeGianVesp08,DiBeGianVesp10a,HenrUrba05,Kuus08}). 

A better understanding of the adopted procedure requires the comprehension of the dichotomy in the behaviour of weak solutions to equation \eqref{equation}: for large scales, when the infimum is considerably smaller than the supremum, the singularity in the time derivative part dominates; while for small scales the equation becomes essentially the degenerate $p$-Laplace equation. Although both cases are treated via the method of intrinsic scaling, each one requires the construction of a suitable intrinsic geometry on its own.

A similar type of behaviour is displayed by the so-called Trudinger's equation which agrees with \eqref{equation} for $q=p-1$. The local H\"{o}lder continuity for the weak solutions of so-called degenerate Trudinger's equation was established in \cite{KuusSiljUrba10} while the complementary case that corresponds to the singular one was proven in \cite{KLSU}. Like~\eqref{equation}, Trudinger's equation becomes essentially the degenerate (singular) $p$-Laplace equation when the solution is strictly away from zero whereas the degeneracy (singularity) dominates otherwise for $p>2$ ($1<p<2$). Nevertheless, unlike~\eqref{equation}, Trudinger's equation is homogeneous of degree one and this favored feature brings some convenience in the analysis since it creates some resemblance with the heat equation. For example, similar to the heat equation, it admits a scale and location invariant parabolic Harnack inequality (see~\cite{Trud68,KinnKuus07}) which plays a crucial role in the proof of H\"older continuity in the most involved case, namely in the case when the infimum is considerably smaller than the supremum of the solution. For equation~\eqref{equation} such a Harnack inequality is not available (see~\cite{FornGian10,FornSosi08}) therefore, a new approach is required to deal with the singularity in the time derivative part. Consequently, despite the similarities in the analysis, the techniques employed are respectably different. We get through the hitch by making use of logarithmic estimates in a modified setting and defining the correct intrinsic geometry imposed by the energy estimates that are highly inhomogeneous.

The problems concerning the continuity of the solutions of partial differential equations presenting a double nonlinearity were also addressed by Porzio and Vespri, Vespri and Ivanov, see~\cite{PorzVesp93, Vesp92, Ivan94}, for equations that are equivalent to~\eqref{equation} provided the solution is strictly away from zero, which is not our case.

The paper is organized as follows. In section 2 we provide the basic definitions and facts on metric spaces which will be needed along the proof, and we also present the main result. In section 3 we construct the iteration argument and the fundamental estimates, namely the energy and logarithmic estimates; section 4 includes the implementation of the alternative argument that will lead to the proof of the continuity of the nonnegative local weak solutions of ~\eqref{equation}.

Pretending this work to be the most self-contained possible, there will be natural repetitions of statements and results already published (which will come together with their respective references). Meanwhile, we tried to keep the paper mostly original and confined ourselves with just referring to the corresponding references for some arguments and highlighted the novelties of this work. 

\section{Preliminary material and main result} \label{section:preliminaries}

Let $\mu$ be a Borel measure and $U$ be an open set in $\Rn$.
The Sobolev space $H^{1,p}(U;\mu)$ is defined to be the completion of $C^\infty(U)$ with respect to the Sobolev norm
\[
\|u\|_{1,p,U}=\left(\int_U(|u|^p + |\nabla u|^p)\, d\mu\right)^{1/p}.
\]
A function $u$ belongs to the local Sobolev space $H_{loc}^{1,p}(U;\mu)$ if it belongs to
$H^{1,p}(\Omega;\mu)$ for every compactly contained subset $\Omega$ of $U$. Moreover, the Sobolev space with zero boundary values $H^{1,p}_0(U;\mu)$ is defined as the completion of $C_0^\infty(U)$ with respect to the Sobolev norm (see e.g. \cite{HeinKilpMart93} for more properties of Sobolev spaces).

The parabolic Sobolev space $L^p(t_1,t_2;H^{1,p}(U;\mu))$, with $t_1<t_2$, is the space of functions $u(x,t)$
such that, for almost every $t\in(t_1,t_2)$ the function $u(\cdot, t)$ belongs to $H^{1,p}(U;\mu)$ and
\[
\int_{t_1}^{t_2}\int_U(|u|^p +|\nabla u|^p) \, d\nu < \infty,
\]
where we denote $d\nu=d\mu\,dt$.

The space $L_{loc}^p(t_1,t_2;H_{loc}^{1,p}(U;\mu))$ is defined analogously.

\begin{definition}
A function $u \in L_{loc}^p(\tau_1,\tau_2;H_{loc}^{1,p}(U;\mu))$, where $U$ is an open and bounded subset of $\Rn$, is a
weak solution of equation \eqref{equation} in $U\times(\tau_1,\tau_2)$ if for any compact $\Omega \subset U$ and for almost every $\tau_1 <t_1 < t_2 < \tau_2$ it satisfies the integral identity
\begin{equation} \label{weak_solution}
\begin{split}
& \qquad \int_{\Omega} \left[u^q(x,t_2)\eta(x,t_2) - u^q(x,t_1)\eta(x,t_1)\right]\,d\mu  \\
&+\int_{t_1}^{t_2}\int_{\Omega} \left( |\nabla u|^{p-2}\nabla u\cdot \nabla \eta
-u^{q} \frac{\partial\eta}{\partial t} \right)\, d\nu = 0
\end{split}
\end{equation}
for every  $\eta \in C_0^\infty(\Omega \times (\tau_1,\tau_2))$.
\end{definition}

Here the boundary terms are taken in the sense of limits
$$
\int_{\Omega} u^q(x,t_1)\eta(x,t_1) \,d\mu = \lim_{h\rightarrow 0} \int_{t_1}^{t_1+h}\int_{\Omega} u^q(x,t)\eta(x,t) \,d\nu
$$
and
$$
\int_{\Omega} u^q(x,t_2)\eta(x,t_2) \,d\mu = \lim_{h\rightarrow 0} \int_{t_2-h}^{t_2}\int_{\Omega} u^q(x,t)\eta(x,t) \,d\nu.
$$
The following notion of weak solution given by the Steklov averages is technically more convenient to work with since it involves the discrete time derivative of the solution and it is equivalent to the previous definition~\cite[p. 101--102]{DiBe93}.

\begin{definition}\label{d:ws}
A weak solution of~\eqref{equation} is a measurable function $u \in L_{loc}^p(\tau_1,\tau_2;H_{loc}^{1,p}(U;\mu))$ satisfying
\begin{equation}\label{ws_a}
\int_{\Omega\times\left\lbrace t\right\rbrace} \! \left\lbrace \pdo{\l(u^q\r)_{h}}{t}\eta\,+\,
\left(\abs{\nabla u}^{p-2}\nabla u\right)_{h}\cdot \nabla \eta \right\rbrace \,d\mu=0,
\end{equation}
for every compact set $\Omega\subset U$, for almost every $\tau_1<t<\tau_2-h$ and for all $\eta \in C^{\infty}_{0}(\Omega).$
\end{definition}

The following definitions and results can be found within the context of analysis on metric measure spaces.

The measure $\mu$ is \emph{doubling} if there is a universal constant $D_0\ge 1$ such that
\[
\mu(B(x,2r))\le D_0 \mu(B(x,r)),
\]
for every $B(x,2r)\subset\Omega$, denoting
\[
B(x,r):= \left\{ y\in \mathbb{R}^d \, :\, \norm{y-x}<r\right\}
\]
the standard open ball in $\Rn$ with radius $r$ and center $x$.

The dimension related to the doubling measure is $d_\mu=\log_2 D_0$; in the particular case of the Lebesgue measure, $\mathcal{L}$, one has $d_\mathcal{L}=d$.

Let $0<r<R<\infty$.
A simple iteration of the doubling condition implies that
\begin{equation}\label{iteration_dp}
\frac{\mu(B(x,R))}{\mu(B(x,r))}
\le C\left(\frac Rr\right)^{d_\mu},
\end{equation}
where $C$ depends only on the doubling constant $D_0$.

For $\alpha\in (0,1]$, we say that the measure $\mu$ satisfies the \emph{$\alpha$-annular decay property} if there exists a constant $c \geq 1$ such that
\begin{equation}\label{annular_decay}
\mu(B(x,r)\setminus B(x,(1-\delta)r))\le c\delta^\alpha\mu(B(x,r)),
\end{equation}
for all $B(x,r)\subset\Omega$ and for all $\delta\in(0,1)$. Obviously, in our setting this property holds since it is an immediate consequence of the doubling condition, in particular of~\eqref{iteration_dp}. Recall also that it was proven by Buckley \cite{Buck99} that any length space, a metric space in which the distance between any pair of points is the infimum of the lengths of rectifiable paths joining them, with a doubling measure supports such an $\alpha$-annular decay property for some $\alpha\in (0, 1]$.

The measure is said to support \emph{a weak $(q,p)$-Poincar\'e inequality} if there exist constants $P_0>0$ and $\tau\ge 1$ such that
\begin{equation}\label{poincare}
\l(\dashint_{B(x,r)}|u-u_{B(x,r)}|^q \, d\mu  \r)^{1/q}
\le P_0 r\left(\dashint_{B(x,\tau r)} |\nabla u|^p \, d\mu\right)^{1/p},
\end{equation}
for every $u\in H^{1,p}_{loc}(\Omega;\mu)$ and $B(x,\tau r)\subset\Omega$, where
\[
u_{B(x,r)}=\dashint_{B(x,r)} u \, d\mu = \frac{1}{\mu(B(x,r))}\int_{B(x,r)} u\, d\mu.
\]
Since the constant $\tau$ may be strictly greater than one, the inequality gains the extra word "weak". It is known (see Theorem 3.4 in \cite{HajlKosk00}) that in $\Rn$ with a doubling measure, the weak $(q,p)$-Poincar\'e inequality with some $\tau\ge1$ implies the $(q,p)$-Poincar\'e inequality with $\tau=1$, therefore we can consider $\tau=1$.

We will assume that the measure supports a weak $(1,p)$-Poincar\'e inequality. The weak $(1,p)$-Poincar\'e inequality together with the doubling condition
imply a weak $(\kappa,p)$-Sobolev-Poincar\'e inequality with
\begin{equation} \label{kappa}
\kappa =
\begin{cases}
\dfrac{d_\mu p}{d_\mu -p}, & 1<p< d_\mu, \\
2p, & p \ge d_\mu,
\end{cases}
\end{equation}
where $d_\mu$ is as above (the proof can be found in \cite{HajlKosk00}).

For Sobolev functions with zero boundary values,
we have the following version of Sobolev's inequality (see \cite{KinnShan01}):
suppose that $u\in  H_0^{1,p}(B(x,r);\mu)$, then
\begin{equation}\label{Sobolevzero}
\left(\dashint_{B(x,r)} |u|^\kappa\, d \mu
\right)^{1/\kappa} \le C r \left(\dashint_{B(x,r)}
|\nabla u|^p\, d \mu \right)^{1/p}.
\end{equation}
Moreover, the weak $(1,p)$-Poincar\'e inequality and the doubling condition also imply
the $(1,q)$-Poincar\'e inequality for some $q<p$~\cite{KeitZhon08}. Consequently, we also have the weak $(\kappa,q)$-Sobolev-Poincar\'e inequality which implies, by means of H\"older's inequality, the $(q,q)$-Poincar\'e inequality for some $q<p$. For more information on Sobolev-type inequalities see \cite{SaCo92,SaCo02}.

We would like to highlight the fact that, even though we worked in subdomains of $\Rn$ we constructed all our arguments so that they can be adjusted to any other metric measure space with a doubling measure supporting a Poincar\'e inequality and an $\alpha$-annular decay property for some $\alpha\in (0, 1]$. As mentioned before, the last property holds true for length spaces equipped with a doubling measure. Moreover, the fact that the theory of Sobolev spaces is based on the notion of weak derivatives brings about the need of an alternative way of defining weak gradients in a general metric measure space because the concept of direction is not always clear in such a context. However, this can be overcome by employing the so-called upper gradients. There are several recent contributions to the development of the theory of Sobolev--type metric spaces. For further knowledge on the subject we refer to the survey \cite{Hajl03} and  for examples of metric spaces equipped with a doubling measure and satisfying a weak Poincar\'e inequality we refer to~\cite{KinnShan01}.

Our main result is the following theorem.
\begin{theorem}\label{main_theorem}
Let $p > 2$ and $0<q<1$. Assume that the measure $\mu$ is doubling and supports a weak $(1,p)$-Poincar\'e inequality. Then any non-negative bounded weak solution of equation~\eqref{equation} is locally H\"older continuous.
\end{theorem}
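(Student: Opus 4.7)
The plan is to establish H\"older continuity at an arbitrary point $(x_0,t_0)$ by the method of intrinsic scaling, that is, by producing a geometric decay of the essential oscillation on a nested sequence of shrinking intrinsic cylinders $Q_n=B(x_0,\rho_n)\times(t_0-\theta_n\rho_n^p,t_0)$. The time scale $\theta_n$ is to be chosen as a function of the current oscillation $\omega_n\ge \essosc_{Q_n} u$ and of the height of $u$ inside $Q_n$, tuned so that the rescaled equation looks as homogeneous as possible. Concretely, one aims at $\essosc_{Q_{n+1}} u \le \eta\,\omega_n$ for some $\eta\in(0,1)$ depending only on the structural data $p$, $q$, $D_0$ and $P_0$; iterating over $n$ then yields the H\"older modulus at $(x_0,t_0)$.

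The argument is driven by a dichotomy reflecting the two competing nonlinearities of~\eqref{equation}. \textbf{Case 1} (bounded away from zero): if $\essinf_{Q_n} u$ is comparable to $\omega_n$, then the singular factor $u^{q-1}$ obtained after applying the chain rule to $\partial_t(u^q)$ is bounded above and below by positive constants depending only on the data, and, once it is absorbed, the equation essentially mimics the degenerate evolutionary $p$-Laplacian. Here I would adopt an intrinsic time scaling of the form $\theta_n \simeq \omega_n^{q+1-p}$ (with a multiplicative constant fixed by the data) and run the two-alternative argument for the $p$-Laplace equation, as in~\cite{DiBe93, Urba08}. \textbf{Case 2} (close to zero): if $\essinf_{Q_n} u \ll \omega_n$, then $u^{q-1}$ blows up where $u$ is small and the time-derivative term dominates; a genuinely different intrinsic geometry must be introduced, dictated by the highly inhomogeneous form of the energy and logarithmic estimates of Section~3.

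In each case, the scheme is the classical two-alternative argument. Either (i) on some time slice of a sub-cylinder the set $\{u>\mu^+-\omega_n/2\}$ (or, by reflection, $\{u<\mu^-+\omega_n/2\}$) is small in $\mu$-measure, in which case the energy estimate combined with the $(\kappa,p)$-Sobolev--Poincar\'e inequality~\eqref{Sobolevzero} feeds a De Giorgi iteration that improves the bound to, say, $u\le\mu^+-\omega_n/2^{k_0}$ on a smaller cylinder; or (ii) this set is large throughout, and then the logarithmic estimate propagates measure information from one time slice to every later one, eventually placing us back in situation (i). The hard part, and the main obstacle, is Case~2: since no scale- and location-invariant Harnack inequality is available for~\eqref{equation}, the homogenization trick used for Trudinger's equation in~\cite{KuusSiljUrba10, KLSU} cannot be borrowed. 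The workaround, as foreshadowed in the introduction, is a \emph{modified} logarithmic estimate built around $\log(\omega_n/(u-k))$ with the level $k$ chosen in terms of $\essinf u$ rather than $\mu^+$, which compensates for the missing homogeneity and closes the oscillation decay. The Steklov-average formulation of Definition~\ref{d:ws} legitimates all the required test-function manipulations, while the doubling condition and the weak $(1,p)$-Poincar\'e inequality enter at every De Giorgi step through~\eqref{Sobolevzero}.
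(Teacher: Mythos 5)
Your overall strategy coincides with the paper's (intrinsic scaling, a dichotomy according to whether the infimum is small relative to the oscillation, a two--alternative argument with De Giorgi iterations fed by the Sobolev inequality, and logarithmic estimates to forward measure information in time, with no Harnack inequality available), but as written there are two genuine gaps. First, the case analysis: your Case~1 (``$\essinf u$ comparable to $\omega_n$'') and Case~2 (``$\essinf u \ll \omega_n$'') either omit the regime $\essinf u \gg \omega_n$, or, if Case~1 is read as $\essinf u \gtrsim \omega_n$, then the claim that $u^{q-1}$ is bounded above and below by constants depending only on the data is false there: one only has $u^{q-1}\simeq (\mu_i^-)^{q-1}$, which is not a data constant, and the correct intrinsic time scale is $(\mu_i^-)^{q-1}\omega_i^{2-p}$ rather than your $\omega_i^{q+1-p}$; the discrepancy is a factor $(\mu_i^-/\omega_i)^{q-1}$ which is unbounded, so with your geometry the energy/De Giorgi estimates do not close with constants depending only on the data. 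This is precisely why the paper, when \eqref{eq:Case I test} fails, switches in Subsection~\ref{subsec3} to the scaling $\bar c_{i_0}=(\mu_{i_0}^-)^{q-1}\l(\omega_{i_0}/2^\lambda\r)^{2-p}$ and uses that the condition $4\mu^->\omega$ persists along the iteration.

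Second, and more seriously, your Case~2 --- which you correctly identify as the crux --- is not actually proved: the ``modified logarithmic estimate built around $\log(\omega_n/(u-k))$ with $k$ chosen in terms of $\essinf u$'' is left unspecified and does not match a mechanism that is known to close the argument. In the paper the resolution is quite concrete: the energy estimates are written for the auxiliary function $\mathcal{J}((u-k)_\pm)$, and in the minus case the only bound usable near $u=0$ is $\mathcal{J}((u-k)_-)\le k^q(u-k)_-$ (power one, \eqref{minus_upper_estimate}), which forces the iterations to handle simultaneously the powers $1$, $2$ and $p$ of $(u-k)_-$; the forwarding-in-time step of the first alternative uses the minus logarithmic estimate for $u$ weighted by $k^{q-1}$ (Lemma~\ref{Harnack_logarithm}); and in the second alternative the logarithmic estimate must be written for $v=u^q$ rather than for $u$, with levels chosen near the supremum, $k=(\mu_i^+)^q-(\omega_i/2^{s_o})^q$ --- the paper notes this change of variables is forced by Lemma~\ref{logarithmic_bound} --- not near the infimum as you propose. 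Moreover, in this singular regime the intrinsic cylinders must carry both nonlinearities, $c_i=\omega_i^{q-1}\l(\omega_i/2^\lambda\r)^{2-p}$, with $\lambda$ fixed only at the end (see \eqref{eq:lambda chosen}) so that the second-alternative measure estimate can be made small enough to start the final iteration. Nothing in your sketch supplies these quantitative choices or a workable substitute, so the oscillation reduction in the regime where $u$ may vanish remains unestablished.
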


\begin{remark}
Theorem \ref{main_theorem} still holds for equations with a more general principal part $ \mathcal{A}$
\[
\frac{\partial (v^{q})}{\partial t}-\divt \mathcal{A}(x,t,v,\nabla v) =0,
\]
where the Carath\'eodory function $\mathcal{A}$ satisfies the structure conditions
\begin{align}
&\mathcal{A}(x,t,v,\eta)\cdot \eta \geq \mathcal{A}_0 \norm{\eta}^p, \label{structure_1}\\
&\mathcal{A}(x,t,v,\eta)\leq \mathcal{A}_1 \norm{\eta}^{p-1}, \label{structure_2}
\end{align}
for almost every $(x,t) \in \R^{n} \times \R$ and every $(v,\eta) \in \R \times \R^n$, for
some constants $\mathcal{A}_0,\mathcal{A}_1>0$.
\end{remark}

Along the text we will work with parabolic cylinders built upon balls constructed as follows. Let $(x_{0},t_{0})$ be a point in the space-time domain. The cylinder of radius $r>0$ and height $s>0$, with vertex at $(x_{0},t_{0})$, is defined as
\begin{align*}
Q_{x_{0},t_{0}}(s,r)&:= B(x_0,r) \times (t_{0}-s,t_{0}) .
\end{align*}
We write $Q\left(s,r\right)$ to denote $Q_{0,0}(s,r)$.
Moreover, we shall use the notation
$\delta Q_{x_{0},t_{0}}(s,r) = Q_{x_{0},t_{0}}(\delta^p s,\delta r)$ and $\delta B(x_0,r) = B(x_0,\delta r)$.

In the sequel, we call \textit{data} the set of a priori constants
$p$, $q$, $d$, $D_0$, and $P_0$; we will assume the weak solutions of equation~\eqref{equation} to be locally bounded taking \begin{equation}\label{sup_u}
\|u\|_{{L}^{\infty}(U\times (\tau_1,\tau_2))}\leq M, \quad \mbox{for some} \  M>0.
\end{equation}
This assumption will be used without further comments hereafter.

To close this section of preliminary material we need to present the following algebraic lemma, the so-called ``lemma on fast geometric convergence'' (cf. \cite{DiBe93}).

\begin{lemma}\label{geometric_convergence}
Let $(Y_n)_n$ be a sequence of positive numbers satisfying
$$
Y_{n+1}\le Cb^nY_n^{1+\alpha},
$$
where $C,b>1$ and $\alpha>0$. Then $(Y_n)_n$ converges to zero as $n\rightarrow\infty$ provided
$$
Y_0\le C^{-1/\alpha}b^{1-\alpha^2}.
$$
\end{lemma}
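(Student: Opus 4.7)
The natural approach is induction: I expect the sequence to decay geometrically, so I will guess that $Y_n \le Y_0\,\lambda^n$ for every $n \ge 0$ and some $\lambda \in (0,1)$ to be determined. The base case $n=0$ is automatic; the work is in selecting $\lambda$ so that the recurrence $Y_{n+1} \le C b^n Y_n^{1+\alpha}$ preserves this bound.

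Assuming inductively that $Y_n \le Y_0 \lambda^n$, the hypothesis gives
\[
Y_{n+1} \;\le\; C b^n (Y_0 \lambda^n)^{1+\alpha} \;=\; C\, Y_0^{\alpha}\, b^n \, \lambda^{n\alpha}\cdot Y_0\lambda^n,
\]
so to close the induction it suffices to arrange that $C Y_0^{\alpha}\, b^n \lambda^{n\alpha} \le \lambda$ for all $n \ge 0$. Choosing $\lambda$ so that $b^n\lambda^{n\alpha}$ is independent of $n$ forces $\lambda = b^{-1/\alpha}$, which is in $(0,1)$ because $b>1$ and $\alpha>0$. With this choice the requirement collapses to a single inequality on the initial datum, namely $C Y_0^{\alpha} \le \lambda = b^{-1/\alpha}$, i.e.\ $Y_0 \le C^{-1/\alpha} b^{-1/\alpha^2}$, which is exactly the hypothesized smallness condition on $Y_0$.

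With the induction closed, we conclude $0 \le Y_n \le Y_0\, b^{-n/\alpha}$ for every $n$, and since $b^{-1/\alpha} < 1$ this yields $Y_n \to 0$ as $n \to \infty$. The only genuinely delicate point is the calibration of the decay rate $\lambda$: one must match the exponents of $b^n$ and $\lambda^{n\alpha}$ so that the geometric growth coming from the factor $b^n$ is exactly cancelled by the super-linear contraction $Y_n^{1+\alpha}$. This balancing is what dictates both the precise value $\lambda = b^{-1/\alpha}$ and the precise threshold on $Y_0$; any larger $\lambda$ would fail to absorb the $b^n$ term, while a larger $Y_0$ would keep the factor $CY_0^{\alpha}$ above $\lambda$ and prevent the induction from propagating. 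No other tools (no summation tricks, no generating functions) are needed; the statement is an exercise in choosing the right ansatz and reading off the compatibility condition.
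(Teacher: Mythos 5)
Your induction is the standard proof of this lemma (the paper itself gives no proof, quoting the result from DiBenedetto's book), and the computation is correct: with the ansatz $Y_n \le Y_0\lambda^n$ and the calibrated rate $\lambda = b^{-1/\alpha}$, the induction closes exactly when $C Y_0^{\alpha}\le b^{-1/\alpha}$, that is when $Y_0 \le C^{-1/\alpha}b^{-1/\alpha^2}$, and then $Y_n \le Y_0 b^{-n/\alpha}\to 0$.

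The one genuine problem is your closing assertion that this ``is exactly the hypothesized smallness condition.'' It is not: the lemma as printed assumes $Y_0\le C^{-1/\alpha}b^{\,1-\alpha^2}$, and $1-\alpha^2\neq -1/\alpha^2$ except for the special value $\alpha^2=(1+\sqrt{5})/2$. For $\alpha^2<(1+\sqrt{5})/2$ --- in particular for the exponent $\alpha=1-p/\kappa\in(0,1)$ with which the lemma is applied in this paper --- the printed threshold is strictly larger, so your argument does not cover the statement as printed; and in fact the printed statement is false: take $\alpha=1$, $C=2$, $b=4$, $Y_0=\tfrac12=C^{-1}b^{\,1-\alpha^2}$ and equality in the recursion, which gives $Y_1=\tfrac12$, $Y_2=2$, $Y_3=128$, and divergence. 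The exponent $1-\alpha^2$ is evidently a misprint for the classical $-1/\alpha^2$ of DiBenedetto's lemma, which is precisely what your induction establishes (and which is what the applications in the paper need, since a smaller admissible $Y_0$ only makes the constants $\alpha_0$, $\alpha_1$, $\alpha_2$ smaller). So you should state that you are proving the corrected threshold $Y_0\le C^{-1/\alpha}b^{-1/\alpha^2}$, rather than claiming exact agreement with the printed hypothesis.
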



\section{Accommodating the double nonlinearity and presenting the logarithmic and energy estimates}

Let $u$ be a nonnegative weak solution of equation \eqref{equation} in $U\times(\tau_1,\tau_2)$ and let $K$ be a compact subset of $U\times (\tau_1,\tau_2)$. As it is now a standard procedure, the H\"older continuity of $u$ at a point $(x_0,t_0)$ in $K$ follows via an iteration process applied in a sequence of nested and shrinking cylinders with vertex at that point. In each step of the iteration process, we prove that the oscillation of the solution reduces in a measurable way as we suitably decrease the size of the cylinder. Ultimately, this will lead to the conclusion that the oscillation converges to zero as the cylinders shrink to the point. Because it is always possible to translate the equation, without loss of generality, we restrict the study to the origin. Moreover, since we are focused on the local interior regularity, it is enough for our purposes to assume $K$ to be the cylinder
$$
K:=Q(R^2,2R), \quad R>0.
$$
For the initial cylinder $K$, let
\[
\mu^-\le\essinf_{K} u \qquad \text{and} \qquad \mu^+\ge\esssup_{K} u,
\]
and define
\[
\omega:=\mu^+-\mu^-.
\]
We choose $\mu^-$ small enough so that
\begin{equation}\label{bounds_inf}
\mu^-\leq \frac{\omega}{4}
\end{equation}
holds. We may assume that $\omega>0$, because otherwise there is nothing to prove. We also assume that, without loss of generality, $\omega\leq1$.

For a suitably chosen sequence $\{Q^i\}$ of cylinders, we will construct a nondecreasing sequence $\{\mu_i^-\}$ and a nonincreasing sequence $\{\mu_i^+\}$ so that
\begin{equation}\label{eq:u between mu(i)pm}
\mu_i^-\le\essinf_{Q^i} u \qquad \text{and} \qquad \mu_i^+\geq \esssup_{Q^i}u.
\end{equation}
and
\[
\omega_i :=\mu_i^+-\mu_i^-=\sigma^i\omega, \quad i=0,1,\ldots
\]
for some $\sigma \in (0,1)$. As a consequence of~\eqref{eq:u between mu(i)pm}, we have
\[
\essosc_{Q^i}u\le \omega_i.
\]
The actual proof will proceed by induction. We will assume that in $Q^i$ the two-sided bound~\eqref{eq:u between mu(i)pm} holds. Then we shall built the cylinder $Q^{i+1}$ in such a way that~\eqref{eq:u between mu(i)pm} is verified with $i+1$.

For $\delta >0$, sufficiently small, let
\begin{equation}\label{sol}
R_i = \delta^i R, \quad i=0,1,\ldots .
\end{equation}
We will work with a sequence $\{Q^i\}$ of nested and shrinking cylinders of the form
$$
Q^i:=Q(c_{i}R_{i}^p,R_{i}), \qquad c_{i} = \omega_i^{q-1}\left(\frac{\omega_{i}}{2^\lambda} \right)^{2-p}, \quad i=0,1,\ldots,
$$
where $\lambda>1$ is a constant to be fixed later depending only on the data.

In particular for the initial cylinder $Q^0=Q(c_0R^p, R)$ we choose $\mu_0^\pm$ as
\begin{equation}\label{first_bounds}
\mu_0^\pm=\mu^\pm
\end{equation}
which immediately gives
$$
\omega_0=\omega \quad \text{and} \quad c_0=\omega^{q-1}\left(\frac{\omega}{2^\lambda} \right)^{2-p}.
$$
We assume that
\begin{equation}\label{first_it}
R<2^{-\lambda}\omega^{1+\frac{1-q}{p-2}}.
\end{equation}
Indeed, otherwise we would have $\omega\leq 2^{\lambda\beta}R^{\beta}$, with $\beta=[1+(1-q)/(p-2)]^{-1}$, but then the oscillation is comparable to the radius and there is nothing to prove. Assumption \eqref{first_it} guarantees the inclusion
$$
Q(c_0R^p, R)\subset Q(R^2,2R)
$$
and thus implies the starting relation
$$
\essosc_{Q^0} u \leq \omega_0:=\omega.
$$
In order to assure the inclusion $Q^{i+1} \subset Q^i$, for all $i \geq0$, it is enough to assume $\delta \leq \sigma^{(p-q-1)/p}$. Along the proof of Theorem \ref{impliesholder} we will determine this parameter more precisely, we'll take it as $\delta := \sigma^{(p-q-1)/p}2^{-\l(3+(\lambda-1)(p-2)/p\r)}$ depending only on the data as $\lambda$ will be determined in terms of the data formerly.

Next we derive the fundamental estimates to prove our regularity result, namely Theorem \ref{main_theorem}, starting with the energy estimates.

Reasoning as in \cite{KLSU} and~\cite{KuusSiljUrba10}, we introduce the auxiliary function
\begin{align*}
\mathcal{J}((u{-}k)_\pm)= & \pm \int_{k^{q}}^{u^{q}} \left(\xi^{1/q} {-} k \right)_\pm \, d\xi
\\ = & \pm q\,\int_{k}^{u} \left(\xi{-}k \right)_\pm \xi^{q-1} \, d\xi
\\ = & q\,\int_0^{(u{-}k)_\pm}(k\pm \xi)^{q-1}\xi \, d\xi
\end{align*}
for which
\begin{equation}\label{J_derivative}
\frac{\partial }{\partial t}\mathcal{J}((u{-}k)_\pm)
=\pm\frac{\partial (u^{q})}{\partial t}(u{-}k)_\pm.
\end{equation}

We will deduce the fundamental energy estimates over the cylinders
$\Omega\times(t_1,t_2)\subset U\times(\tau_1,\tau_2)$, written for $\mathcal{J}((u{-}k)_\pm)$ and $(u{-}k)_\pm$.

\begin{lemma}\label{energy}
Let $u\ge 0$ be a weak solution of \eqref{equation} and let $k\ge 0$.
Then there exists a constant $C=C(p)>0$ such that
\begin{equation}\label{l:energyest}
\begin{split}
&\esssup_{t_1<t<t_2}\int_{\Omega}\mathcal{J}((u{-}k)_\pm)\varphi^p\, d\mu
+\int_{t_1}^{t_2}\int_{\Omega} |\nabla (u{-}k)_\pm\varphi|^{p}
\, d\nu \\
&\le C\int_{t_1}^{t_2}\int_{\Omega}(u{-}k)_\pm^p|\nabla \varphi|^p\, d\nu
+C\int_{t_1}^{t_2}\int_{\Omega} \mathcal{J}((u{-}k)_\pm)\varphi^{p-1}\left(\frac{\partial \varphi}{\partial t}\right)_+\, d\nu , \\
\end{split}
\end{equation}
for every nonnegative $\varphi\in C_0^\infty(\Omega\times(t_1,t_2))$.
\end{lemma}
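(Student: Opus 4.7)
The plan is to insert $\eta = \pm(u-k)_\pm \varphi^p$ as a test function into the Steklov-averaged formulation \eqref{ws_a}, integrate over $(t_1,\bar t)$ for arbitrary $\bar t\in(t_1,t_2)$, let $h\to 0$, and take the supremum over $\bar t$. Because the weak solution carries no a priori time regularity, the manipulation first has to be performed on the Steklov level, where $\partial_t(u^q)_h$ exists classically; the subsequent limit $h\to 0$ follows by dominated convergence, using that $\mathcal{J}(s)$ extends continuously to $s=0$ (the singular factor $s^{q-1}$ is tamed under the integral starting from $0$ in the definition of $\mathcal{J}$) and that $(u{-}k)_\pm$ together with $\nabla(u{-}k)_\pm$ lie in $L^p_{\mathrm{loc}}$.

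For the time contribution, identity \eqref{J_derivative} combined with integration by parts in $t$ (the $t=t_1$ boundary term vanishes because $\varphi$ has compact support in time) yields
\[
\int_{t_1}^{\bar t}\!\int_\Omega \frac{\partial(u^q)}{\partial t}\bigl(\pm(u{-}k)_\pm\bigr)\varphi^p\,d\nu
= \int_\Omega \mathcal{J}\bigl((u{-}k)_\pm(\cdot,\bar t)\bigr)\varphi^p(\cdot,\bar t)\,d\mu
- p\!\int_{t_1}^{\bar t}\!\int_\Omega \mathcal{J}((u{-}k)_\pm)\varphi^{p-1}\partial_t\varphi\,d\nu,
\]
and bounding the last integrand from above by $p\,\mathcal{J}((u{-}k)_\pm)\varphi^{p-1}(\partial_t\varphi)_+$ is exactly what produces the $(\partial_t\varphi)_+$ term in \eqref{l:energyest}. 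For the diffusion contribution, using $\nabla(u{-}k)_\pm = \pm\nabla u\,\chi_{\{\pm(u-k)>0\}}$ one computes in either sign case
\[
|\nabla u|^{p-2}\nabla u \cdot \nabla\bigl(\pm(u{-}k)_\pm\varphi^p\bigr)
= |\nabla(u{-}k)_\pm|^p\varphi^p \pm p(u{-}k)_\pm\varphi^{p-1}|\nabla u|^{p-2}\nabla u\cdot\nabla\varphi,
\]
and Young's inequality $ab\le \varepsilon a^{p/(p-1)} + C_\varepsilon b^p$, applied to the cross term with $a=|\nabla(u{-}k)_\pm|^{p-1}\varphi^{p-1}$ and $b=(u{-}k)_\pm|\nabla\varphi|$, absorbs $\varepsilon|\nabla(u{-}k)_\pm|^p\varphi^p$ into the left-hand side and leaves $C(p)(u{-}k)_\pm^p|\nabla\varphi|^p$ on the right.

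Summing the two pieces and taking $\esssup_{\bar t\in(t_1,t_2)}$ of the first integrand on the right-hand side of the time identity produces \eqref{l:energyest}. The principal obstacle is the Steklov-level bookkeeping: one has to regularize $\pm(u{-}k)_\pm\varphi^p$ so that it is admissible in \eqref{ws_a}, handle the potential singularity of $u^{q-1}$ at $\{u=0\}$ through the smoothing properties of $\mathcal{J}$, and verify the convergence of every term as $h\to 0$ using the local $L^p$ control on $u$ and $\nabla u$. Once this scaffolding is in place the argument is the standard parabolic Caccioppoli computation, with the doubly-nonlinear difficulty entirely encapsulated in the algebraic identity \eqref{J_derivative} for the auxiliary function $\mathcal{J}$.
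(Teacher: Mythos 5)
Your overall strategy coincides with the paper's: test with $\pm(u-k)_\pm\varphi^p$, convert the parabolic term via \eqref{J_derivative} and an integration by parts in time, and treat the diffusion term with Young's inequality. The gap sits exactly at the point you defer to ``Steklov-level bookkeeping.'' With your literal choice of test function, the parabolic term at the Steklov level is $\int_{t_1}^{\bar t}\int_\Omega \partial_t (u^q)_h\,\bigl(\pm(u-k)_\pm\bigr)\varphi^p\,d\nu$, and this integrand is \emph{not} an exact time derivative: $(u-k)_\pm$ carries no time regularity, so the integration by parts you invoke is not available before passing to the limit, and letting $h\to 0$ ``by dominated convergence'' only addresses integrability of $\mathcal{J}$, not the identity you need. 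Worse, the natural way to relate this pairing to a derivative of $\mathcal{J}$ --- convexity of $\mathcal{J}$ as a function of $w=u^q$, whose derivative is $\pm(u-k)_\pm$ --- gives $\pm(u(\cdot,t)-k)_\pm\,\partial_t(u^q)_h \le \partial_t\bigl[\bigl(\mathcal{J}((u-k)_\pm)\bigr)_h\bigr]$ a.e., i.e.\ an \emph{upper} bound on the parabolic term, whereas \eqref{l:energyest} requires a \emph{lower} bound (the parabolic term must dominate $\int_\Omega\mathcal{J}\varphi^p\,d\mu$ at time $\bar t$ minus the $(\partial_t\varphi)_+$ term, since the diffusion term is bounded from below). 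So the step as sketched fails in the direction that matters.

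The paper closes this with a specific device you omit: take $\eta=\pm(\bar u_h-k)_\pm\varphi^p$ with $\bar u_h=[(u^q)_h]^{1/q}$, i.e.\ truncate the Steklov average itself. Then $\partial_t(u^q)_h\,\bigl(\pm(\bar u_h-k)_\pm\bigr)=\partial_t\,\mathcal{J}\bigl((\bar u_h-k)_\pm\bigr)$ holds exactly (this is \eqref{J_derivative} applied to the time-smooth function $\bar u_h$), so the integration by parts in time is legitimate at the $h$-level, the $t_1$ boundary term vanishes since $\varphi$ does, and only afterwards does one let $h\to0$, where ordinary convergence of Steklov averages suffices. Your handling of the diffusion term (chain rule on the truncation, then Young's inequality with a small parameter to absorb the cross term) agrees with the paper's and is fine; it is the parabolic term where the missing idea lies.
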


\begin{proof}

We take $\eta=\pm\left(\bar u_h-k\right)_{\pm}\varphi^{p}$, where $\bar u_h=[(u^{q})_{h}]^{1/q}$, as a test function in \eqref{ws_a}, which is in the Sobolev space
$H^{1,p}_0(\Omega)$ and only admissible after an approximation, and integrate in time over $(t_1,t)$, for $t\in(t_1,t_2)$ to obtain
$$
0=\int\limits_{t_1}^{t}\int\limits_{\Omega} \! \left[\pdo {((u^{q})_{h})}{t}\,\eta
+ \left(\abs{\nabla u}^{p-2}\nabla u\right)_{h}\cdot\nabla \eta\right]\,d\nu.
$$
The first term on the right hand side can be estimated from below by using~\eqref{J_derivative},
integrating by parts and finally letting $h\rightarrow 0$ as
\begin{align*}
&\int\limits_{t_1}^{t}\int\limits_{\Omega} \! \pdo {(\bar u_h)^q}{t}\left[\pm(\bar u_{h}-k)_{\pm}\varphi^{p}\right] \,d\nu
=\int\limits_{t_1}^{t}\int\limits_{\Omega} \! \parts{}{t}\left( \mathcal{J}\left((\bar u_{h}-k)_{\pm}\right)\right)\varphi^{p} \,d\nu\\
&\longrightarrow \int\limits_{\Omega\times \{t\}} \! \mathcal{J}\left((u-k)_{\pm}\right)\varphi^{p} \,d\mu \,
-p \int\limits_{t_1}^{t}\int\limits_{\Omega} \! \mathcal{J}\left((u-k)_{\pm}\right)\varphi^{p-1}\l(\pdo{\varphi}{t}\r)_+ \,d\nu.
\end{align*}
Concerning the second integral on the right hand side we first let $h\rightarrow 0$ and then use the estimate
\begin{align*}
& |\nabla (u-k)_\pm|^{p-2}(\pm \nabla (u-k)_\pm)
\cdot  \nabla (\pm (u-k)_{\pm}\varphi^p )
\\ & \qquad \geq |\nabla (u-k)_\pm|^p \varphi^p - p |\nabla (u-k)_\pm|^{p-1}\varphi^{p-1} (u-k)_{\pm} |\nabla\varphi|.
\end{align*}
The last term is estimated further by Young's inequality as
\begin{align*}
& - p |\nabla (u{-}k)_\pm|^{p-1}\varphi^{p-1} (u{-}k)_{\pm} |\nabla\varphi|
\\ & \qquad
\geq - \frac12 |\nabla (u{-}k)_\pm|^p \varphi^p - C(u{-}k)_{\pm}^p |\nabla\varphi|^p.
\end{align*}
Hence the estimation of the second integral on the right hand side reads as
\begin{align*}
&\int\limits_{t_1}^{t}\int\limits_{\Omega} \! \left(\abs{\nabla u}^{p-2}\nabla u\right)_{h}\cdot\nabla \eta \,d\nu\\
&\geq \frac{1}{2}\int\limits_{t_1}^{t}\int\limits_{\Omega}\abs{\nabla(u-k)_{\pm}\varphi}^{p} \,d\nu-
C(p)\int\limits_{t_1}^{t}\int\limits_{\Omega} \! (u-k)_{\pm}^{p}\abs{\nabla\varphi}^{p}\,d\nu.
\end{align*}

Since $t\in(t_1,t_2)$ is arbitrary we can combine both estimates to obtain~\eqref{l:energyest}.

\end{proof}

Whenever energy estimates are used we will need to get upper and lower bounds for the auxiliary function $\mathcal{J}((u{-}k)_\pm)$. In what follows, we present the plus and minus cases separately.

As for the upper case we have
\begin{equation}\label{plus_upper_estimate}
\begin{split}
\mathcal{J}((u{-}k)_+) &= q\,\int_0^{(u{-}k)_+}(k+ \xi)^{q-1}\xi \, d\xi\\
&\le q\,k^{q-1}\int_0^{(u-k)_+} \xi d\xi \\
&= qk^{q-1}\frac{(u{-}k)_+^2}{2}
\end{split}
\end{equation}
and for the lower case we get
\begin{equation}\label{plus_lower_estimate}
\begin{split}
\mathcal{J}((u{-}k)_+)
&\ge q\,u^{q-1}\int_0^{(u{-}k)_+}\xi\, d\xi \\
&= q\,u^{q-1}\frac{(u{-}k)_+^2}{2}.
\end{split}
\end{equation}

Observe that in this case the upper and lower bounds obtained are exactly the same as the ones in \cite{KLSU} for $q=p-1$. This meets our expectations since both in \eqref{equation} and in the equation treated in \cite{KLSU} the term related to the time derivative has a negative exponent. Note further that for any strictly positive level $k$ both bounds are applicable because $u$ is above that level in the set $\brc{(u{-}k)_+>0}$.

On the other hand, for the minus case we obtain the following estimates
\begin{equation}\label{minus_upper_estimate}
\begin{split}
\mathcal{J}((u{-}k)_-) &=q\,\int_0^{(u{-}k)_-}(k-\xi)^{q-1}\xi \, d\xi\\
&\le q\,(u{-}k)_-\int_0^{(u{-}k)_-}(k-\xi)^{q-1}\, d\xi \\
&\le k^q\,(u{-}k)_-
\end{split}
\end{equation}
and
\begin{equation}\label{minus_lower_estimate}
\begin{split}
\mathcal{J}((u{-}k)_-)&\ge q\,k^{q-1}\int_0^{(u{-}k)_-}\xi\, d\xi \\
&= q\,k^{q-1}\frac{(u{-}k)_-^2}{2}.
\end{split}
\end{equation}
However, in the minus case we don't get the same estimate as in \cite{KLSU} for the upper case. Actually, we still can get the bound obtained in \cite{KLSU} (see Subsection~\ref{subsec3}) however, it is not the adequate one to use since it blows up at points where $u=0$. Note that in \cite{KLSU}, this very case is treated by virtue of Harnack's inequality which is not available in our context. As a consequence, when working with inequalities obtained via energy estimates we will need to deal simultaneously with the powers $1$, $2$ and $p$ of the truncated function $(u{-}k)_-$.

The energy estimates will be used to prove some measure estimates for distribution sets which imply an oscillation reduction in a subcylinder. Afterwards we shall need to forward this information in time since this particular subcylinder does not necessarily contain the origin. This forwarding argument will be realized by logarithmic estimates.

For this purpose we introduce the function
\[
\psi_\pm(u):=\Psi(H_k^\pm, (u{-}k)_\pm, c)=\l(\ln\l(\frac{H_k^\pm}{c+H_k^\pm-(u{-}k)_\pm}\r)\r)_+.
\]
from which we obtain the following logarithmic integral inequalities - these inequalities will be derived in a formal fashion; an accurate justification involves the use of Steklov averages (see ~\cite[p. 101--102]{DiBe93}).

\begin{lemma}\label{Harnack_logarithm}
Let $u\ge 0$ be a weak solution of equation~\eqref{equation}. Then there exists a constant $C=C(p,q)>0$ such that, for $p>2$ and $0<q<1$, we have
\begin{align*}
k^{q-1}\esssup_{t_1<t<t_2}&\,\int_{\Omega} \psi_{-}^{2}(u)(x,t)\varphi^p(x) \, d\mu \le \int_{\Omega} u^{q-1}\psi_{-}^2(u)(x,t_1)\varphi^p(x) \, d\mu \\
&\qquad \qquad \qquad +C \int_{t_1}^{t_2}\int_\Omega \psi_-(u)|(\psi_-)^{'}(u)|^{2-p}|\nabla  \varphi|^p \, d\nu\\
\end{align*}
and
\begin{align*}
\esssup_{t_1<t<t_2}&\int_{\Omega} \psi_{+}^{2}(v)(x,t)\varphi^p(x) \, d\mu \le \int_{\Omega} \psi_{+}^2(v)(x,t_1)\varphi^p(x) \, d\mu \\
&\qquad \qquad +C \int_{t_1}^{t_2}\int_\Omega v^{\frac{(1-q)(p-1)}{q}}\psi_+(v)|(\psi_+)^{'}(v)|^{2-p}|\nabla  \varphi|^p \, d\nu,
\end{align*}
where $\varphi\in C^{\infty}_0(\Omega)$ is any time-independent test function and $v=u^q$. In the plus case we assume that $u$ is strictly away from zero.
\end{lemma}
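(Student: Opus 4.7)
The proof will run the classical ``log test function'' scheme---with Steklov-averaged test functions $(\psi_{\pm}^2)'(\cdot)\varphi^p$, the pointwise identity $\psi_{\pm}''=(\psi_{\pm}')^2$ on the support of $\psi_{\pm}$, and Young's inequality on the spatial cross term---but adapted to the singular time derivative $\partial_t(u^q)$ in two different ways for the plus and minus cases. For the plus case, the change of variable $v=u^q$ puts the equation into the form $\partial_s v = q^{1-p}\,\nabla\cdot\bigl(v^{(1-q)(p-1)/q}|\nabla v|^{p-2}\nabla v\bigr)$, which is meaningful precisely because $u$ is assumed strictly away from zero. Testing with $\eta=(\psi_+^2)'(v)\varphi^p$ makes the time integrand an exact derivative $\partial_s[\psi_+^2(v)]$, so integration in time yields directly $\psi_+^2(v)(t)-\psi_+^2(v)(t_1)$, while every spatial term naturally picks up the weight $v^{(1-q)(p-1)/q}$ that appears on the right-hand side of the stated inequality.

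For the minus case, no such substitution is admissible---the statement fixes $\psi_-$ as a function of $u$ and demands the weight $u^{q-1}$ on the right---so I will keep the original equation and test with $\eta=(\psi_-^2)'(u)\varphi^p$. The time integrand $qu^{q-1}(\psi_-^2)'(u)\,u_s$ is the exact time derivative of the auxiliary function, referenced at $v_0=k^q$,
\[
\tilde\Psi(u^q) := q\int_k^u \sigma^{q-1}(\psi_-^2)'(\sigma)\,d\sigma
= qu^{q-1}\psi_-^2(u)-q(q-1)\int_k^u \sigma^{q-2}\psi_-^2(\sigma)\,d\sigma,
\]
where the second equality follows by integrating by parts in $\sigma$ and using $\psi_-(k)=0$. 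The crux of the minus case is the sharp two-sided bound
\[
qk^{q-1}\psi_-^2(u) \;\le\; \tilde\Psi(u^q) \;\le\; qu^{q-1}\psi_-^2(u), \qquad u\le k,
\]
which produces exactly the asymmetric $k^{q-1}/u^{q-1}$ factors of the lemma. The upper estimate is immediate because $-q(q-1)>0$ makes the residual integral contribute with a minus sign; the lower estimate is obtained from the monotonicity $\psi_-(\sigma)\le\psi_-(u)$ for $\sigma\in(u,k)$ together with $\int_u^k\sigma^{q-2}\,d\sigma = (u^{q-1}-k^{q-1})/(1-q)$, which yields the exact cancellation $qu^{q-1}\psi_-^2 - q(u^{q-1}-k^{q-1})\psi_-^2 = qk^{q-1}\psi_-^2$.

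The spatial analysis is then parallel and routine in both cases. Expanding $\nabla\eta$ and invoking $\psi_{\pm}''=(\psi_{\pm}')^2$ produces the non-negative principal term $2(\psi_{\pm}')^2(1+\psi_{\pm})|\nabla u|^p\varphi^p$ (with the extra weight $v^{(1-q)(p-1)/q}$ and with $|\nabla v|$ in place of $|\nabla u|$ in the plus case) plus a cross term of order $\psi_{\pm}|\psi_{\pm}'||\nabla u|^{p-1}|\nabla\varphi|\varphi^{p-1}$. Young's inequality with conjugate exponents $p/(p-1)$ and $p$, organized so that $[|\psi_{\pm}'|^{2/p}|\nabla u|\varphi]^{p-1}$ is the absorbable factor, bounds the cross term by $\epsilon(\psi_{\pm}')^2|\nabla u|^p\varphi^p + C\psi_{\pm}^p|\psi_{\pm}'|^{2-p}|\nabla\varphi|^p$; the first summand is swallowed by the principal term and the superfluous power $\psi_{\pm}^{p-1}$ is absorbed into $C$ via the uniform upper bound $\psi_{\pm}\le\ln(H_k^{\pm}/c)$. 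Taking $\esssup_t$ over $(t_1,t_2)$ and combining the two inequalities for $\tilde\Psi(u^q)$ in the minus case (respectively, the trivial time identity in the plus case) with this spatial estimate delivers both conclusions. The main obstacle is unambiguously the two-sided bound on $\tilde\Psi(u^q)$ in the minus case: it is where the asymmetric $k^{q-1}/u^{q-1}$ structure of the estimate is engineered, and any weaker monotonicity estimate on $\psi_-$ inside the residual integral would leave an irreducible $(u^{q-1}-k^{q-1})\psi_-^2$ remainder that cannot be controlled by the spatial right-hand side.
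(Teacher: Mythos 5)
Your treatment of the time terms coincides with the paper's proof: in the minus case you test with $(\psi_-^2)'(u)\varphi^p$, rewrite the parabolic term as the exact time derivative of $q\int_k^u\sigma^{q-1}(\psi_-^2)'(\sigma)\,d\sigma$, and the two-sided bound $qk^{q-1}\psi_-^2(u)\le q\int_k^u\sigma^{q-1}(\psi_-^2)'(\sigma)\,d\sigma\le qu^{q-1}\psi_-^2(u)$, obtained by integration by parts, $\psi_-(k)=0$ and the monotonicity $\psi_-(\sigma)\le\psi_-(u)$ on $(u,k)$, is exactly the mechanism the paper uses to produce the asymmetric $k^{q-1}$ versus $u^{q-1}$ weights; in the plus case the substitution $v=u^q$ and the test function $(\psi_+^2)'(v)\varphi^p$, with the chain rule generating the weight $v^{(1-q)(p-1)/q}$, is likewise the paper's argument.

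There is, however, one step that fails as you describe it: the handling of the gradient cross term. Young's inequality with exponents $p/(p-1)$ and $p$, applied as you set it up, produces the error term $C\,\psi_\pm^{p}\,|\psi_\pm'|^{2-p}|\nabla\varphi|^p$, and you propose to reduce $\psi_\pm^{p}$ to $\psi_\pm$ by absorbing $\psi_\pm^{p-1}\le\bigl(\ln(H_k^\pm/c)\bigr)^{p-1}$ into the constant. This does not prove the lemma as stated: the constant would then depend on the levels $H_k^\pm$ and $c$, not only on $(p,q)$, and the right-hand side would no longer be linear in $\psi_\pm$. The linearity is not cosmetic: in the applications (the forwarding-in-time lemmas) one takes $c=\omega_i/2^{n+2}$, resp.\ $c=(\omega_i/2^{s+s_o+1})^q$, so that $\ln(H_k^\pm/c)\sim n$, resp.\ $\sim s$, and the proofs conclude by letting $n$, $s$ grow so that a ratio of the form $n/(n-1)^2$ tends to zero; with your absorption the effective bound becomes of order $n^{p-1}\cdot n/(n-1)^2\sim n^{p-2}$, which blows up for $p>2$, so the subsequent measure estimates collapse. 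The repair is standard and already available in your own computation: you kept the full coercive term $2(1+\psi_\pm)(\psi_\pm')^2|\nabla u|^p\varphi^p$, so in Young's inequality distribute the factor $(1+\psi_\pm)^{(p-1)/p}$ into the absorbable part; the error term then carries $\psi_\pm^{p}(1+\psi_\pm)^{1-p}\le\psi_\pm$, which yields the stated bound $C(p,q)\,\psi_\pm|\psi_\pm'|^{2-p}|\nabla\varphi|^p$ (with the extra weight $v^{(1-q)(p-1)/q}$ in the plus case), exactly as in the paper.
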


\begin{proof}
We start with the minus case. Choose
\[
\eta_-(u)=\frac{\partial}{\partial u}(\psi_-^2(u))\varphi^p
\]
in the definition of weak solution and integrate in time over $(t_1,t)$ for $t\in(t_1,t_2)$. Observe that
\begin{equation}\label{psi''}
(\psi_-^2)''=2(1+\psi_-)(\psi_-')^2.
\end{equation}
The estimate of the parabolic term reads as

\begin{align}\label{eq:log_est_1}
\int_{t_1}^{t}\int_\Omega \frac{\partial u^q}{\partial t}  \eta_{-}(u)\, d\nu &= \int_{t_1}^{t}\int_\Omega \frac{\partial }{\partial t} \int_{k^{q}}^{u^{q}} \eta_{-}(s^{1/q})\,ds \, d\nu \nonumber\\
&=\l[\int_\Omega \int_{k^{q}}^{u^{q}}\eta_{-}(s^{1/q})\,ds \, d\mu\r]_{t_1}^{t} \nonumber\\
&=\l[ \int_\Omega q \int_k^u \eta_{-}(s)s^{q-1}\,ds\, d\mu\r]_{t_1}^{t}.
\end{align}
Integrating by parts we obtain

\begin{align*}
\int_k^u \eta_{-}(s)s^{q-1}\,ds &= \int_k^u (\psi_{-}^{2}(s))^{'}s^{q-1}\,ds\,\varphi^p \\
&=\l[\psi_{-}^{2}(s)s^{q-1}\r]_{k}^{u} \varphi^p- (q-1)\int_k^u \psi_{-}^{2}(s)s^{q-2}\,ds\,\varphi^p\\
&=\psi_{-}^{2}(u)u^{q-1}\varphi^p - (1-q)\int_u^k \psi_{-}^{2}(s)s^{q-2}\,ds\,\varphi^p.
\end{align*}
From the above equality we get
$$
\int_k^u \eta_{-}(s)s^{q-1}\,ds\geq \psi_-^2(u)k^{q-1}\varphi^p
$$
and
$$
\int_k^u \eta_{-}(s)s^{q-1}\,ds\leq \psi_-^2(u)u^{q-1}\varphi^p.
$$
Using these estimates in~\eqref{eq:log_est_1} gives
\begin{align}\label{eq:log_est_2}
\int_{t_1}^{t}\int_\Omega \frac{\partial u^q}{\partial t}  \eta_{-}(u)\, d\nu \geq & q k^{q-1}\int_{\Omega} \psi_{-}^2(u)(x,t)\varphi^p(x)\,d\mu \nonumber \\
& \qquad - q \int_{\Omega} u^{q-1}\psi_{-}^2(u)(x,t_1)\varphi^p(x)\,d\mu,
\end{align}
for all $t\in(t_1,t_2)$.

Concerning the remaining term, by using~\eqref{psi''} together with Young's inequality, we obtain
\begin{align*}
&  |\nabla u|^{p-2}\nabla u \cdot \nabla \eta_-
 =  |\nabla u|^{p-2}\nabla u \cdot \nabla((\psi_-^2(u))'\varphi^p)
\\ & \qquad \geq 2 |\nabla u|^{p}(1+\psi_-)(\psi_-')^2\varphi^p
-2p|\nabla u|^{p-1}  \psi_-|\psi_-'|\varphi^{p-1}|\nabla\varphi|
\\ & \qquad \geq   |\nabla u|^{p}(\psi_-')^2\varphi^p
- C\psi_-|\psi_-'|^{2-p}|\nabla\varphi|^p,
\end{align*}
almost everywhere. The claim for the minus case follows from the previous estimate and \eqref{eq:log_est_2}.

In the plus case we choose
\[
\eta_+(v)=\frac{\partial}{\partial v}(\psi_+^2(v))\varphi^p,
\]
where $v=u^q$. Notice that~\eqref{psi''} continues to hold for this choice of test function.
After integrating in time over $(t_1,t)$ for $t\in(t_1,t_2)$, the estimate of the parabolic term follows as
\begin{align*}
&\int_{t_1}^{t}\int_\Omega \frac{\partial v}{\partial t}  \eta_+(u^q)\, d\nu= \int_{t_1}^{t}\int_\Omega \frac{\partial v}{\partial t} \frac{\partial}{\partial v}(\psi_+^2(v))\varphi^p \, d\nu\\
&=\int_{t_1}^{t}\int_\Omega \frac{\partial}{\partial t}(\psi_+^2(v))\varphi^p \, d\nu\\
&=\l[ \int_\Omega \psi_+^2(v)\varphi^p d\mu\r]_{t_1}^{t}.
\end{align*}
By using~\eqref{psi''} together with Young's inequality, we obtain
\begin{align*}
|\nabla u|^{p-2}\nabla u \cdot \nabla \eta_+ &=  |\nabla v^{1/q}|^{p-2}\nabla v^{1/q} \cdot \nabla((\psi_+^2(v))'\varphi^p)\\
& = \frac{1}{q^{p-1}}v^{\frac{(1-q)(p-1)}{q}}|\nabla v|^{p} \brc{2(1+\psi_+)(\psi_+')^2\varphi^p}\\
&\qquad +\frac{p}{q^{p-1}}v^{\frac{(1-q)(p-1)}{q}}|\nabla v|^{p-2}\nabla v \brc{2\psi_+\psi_+'\nabla \varphi} \varphi^{p-1}\\
& \geq - C(p,q)v^{\frac{(1-q)(p-1)}{q}}\psi_+(v)(\psi_+'(v))^{2-p}|\nabla\varphi|^p \varphi^{p-1}
\end{align*}
almost everywhere, from which the claim follows. Note that here we also use the fact that $f(x)=x^{1/q}$, $0<q<1$, is Lipschitz for strictly positive values of $x$. Since in this case we assume that $u$ is strictly positive by the chain rule we get
$$
\nabla v^{1/q}=\frac{1}{q}v^{\frac{1-q}{q}} \nabla v.
$$

\end{proof}

\begin{remark}
The necessity of working with a logarithmic estimate written for $u^q$ instead of $u$ itself in the plus case is intrinsically related to Lemma \ref{logarithmic_bound}.
\end{remark}


\section{Continuity analysis}

Suppose that $u\geq 0$ is a weak solution of \eqref{equation} in $Q^i=Q\left(c_i R_i^{p},R_i\right)$ and we have
\begin{equation}\label{inf_sup}
\mu_i^- \leq \essinf_{Q^i}{u} \leq \esssup_{Q^i}{u} \leq \mu_i^+, \qquad \mu_i^+-\mu_i^-=:\omega_i.
\end{equation}
Assume that $\mu_i^\pm$ satisfy
\begin{equation}\label{eq:Case I test}
\qquad 4\mu_{i}^- \leq \omega_{i} = \mu_i^+ - \mu_i^-.
\end{equation}
The case where this assumption fails will be treated in Subsection~\ref{subsec3}.

Inside $Q^i$, we consider subcylinders of smaller size
$$
Q_{t^*}\left(d_{i}R_i^{p},R_i\right), \qquad d_{i}= \omega_i^{q-1}\left(\frac{\omega_{i}}{2}\right)^{2-p}.
$$
These cylinders are contained in $Q^i$ whenever
\begin{equation}\label{time_level}
[2^{p-2}-2^{\lambda(p-2)}]R_i^p/\omega_{i}^{(p-1-q)}<t^*<0.
\end{equation}
Since $\lambda$ can always be arranged so that $N_i=c_i/d_i=2^{(\lambda-1)(p-2)}$ to be an integer, $Q^i$ can be regarded as the union, up to a set
of measure zero, of $N_i$ disjoint cylinders each congruent to $Q_{t^*}\left(d_{i}R^{p},R\right)$.

The reduction of the oscillation is based on the analysis of an alternative (see~\cite{DiBe93, Urba08}). For a constant $\alpha_{0}\in(0,1)$, that will be determined depending only on the data, either
\\

\noindent {\bf The First Alternative.}

\noindent there is a cylinder of the type $Q_{t^*}(d_iR_i^p, R_i)$
\begin{equation}\label{l:alt1}
\frac{\nu\left(\left\lbrace(x,t)\in Q_{t^*}(d_iR_i^p, R_i):\,u(x,t)<\mu^{-}_{i}
+\omega_{i}/2\right\rbrace\right)}{\nu\left( Q_{t^*}\left( d_iR_i^{p},R_i\right)\right)} \leq \alpha_{0},
\end{equation}
or, since $\mu^{+}_{i}-\frac{\omega_i}{2}=\mu^{-}_{i}+\frac{\omega_i}{2}$
\\

\noindent {\bf The Second Alternative.}

\noindent for every cylinder $Q_{t^*}(d_iR_i^p, R_i)\subset Q^i$, we have
\begin{equation}\label{l:alt2}
\frac{\nu\left(\left\lbrace(x,t)\in Q_{t^*}(d_iR_i^p, R_i):\,u(x,t)>\mu^{+}_{i}
-\omega_{i}/2\right\rbrace\right)}{\nu\left(Q_{t^*}\left(d_iR_i^p, R_i\right)\right)}< 1-\alpha_{0}.
\end{equation}

\noindent In both cases, we will prove that the essential oscillation of $u$ within a smaller cylinder
decreases in a measurable way, measurable so that we can derive a modulus of continuity. The constant $\alpha_0$ will be fixed in the course of the proof of Lemma~\ref{main_lemma1}.

\subsection{Reduction of the oscillation in the first alternative}

Now we assume that the first alternative holds.

\begin{lemma}\label{main_lemma1}
There exists a constant $\alpha_0\in (0,1)$, depending only on the data, such that if \eqref{l:alt1} holds for some $t^*$ as in~\eqref{time_level}, then
$$
u(x,t)>\mu_i^-+\frac{\omega_i}{4} \qquad \text{a.e.} \quad \text{in}\quad \tfrac{1}{2}Q_{t^*}\l(d_iR_i^p,R_i\r).
$$
\begin{proof}
Define
\[
R_n=\frac{R_i}{2}+\frac{R_i}{2^{n+1}},
\qquad Q_{n}=B_{n}\times T_{n}=B(R_n)\times (t^*-d_i R_n^p,t^*)
\]
and
\[
k_n = \mu_i^- + \frac{\omega_i}{4}\left(1 + \frac1{2^{n}}  \right)
\]
for $n=0,1,\dots$. Observe that
\be\label{bound_level}
\frac{\omega_i}{4}\leq k_n \leq \omega_i.
\ee
Here the first inequality is trivial and follows from the definition of $k_n$ while the second one follows by assumption~\eqref{eq:Case I test}.

Choose cutoff functions $\varphi_n \in C^\infty(Q_n)$, vanishing on the parabolic boundary of $Q_n$, and such that $0\le\varphi_n\le 1$,
$\varphi_n=1$ in $Q_{n+1}$,
\begin{align}\label{gradient_estimates}
|\nabla \varphi_n|\le\frac{C2^{n}}{ R_i}
\quad\text{and}\quad \left(\frac{\partial \varphi_n}{\partial t}\right)_+\le \frac{C2^{pn}}{d_i R_i^p}.
\end{align}

By H\"older's inequality and Sobolev's inequality \eqref{Sobolevzero} together with the doubling property of the measure, we have
\begin{equation}\label{HoldSobo temp}
\begin{split}
\dashint_{Q_{n+1}} & (u{-}k_n)_-^{2(1-p/\kappa)+ p} \, d\nu
\\ \leq &
\frac{\nu(Q_{n})}{\nu(Q_{n+1})}
\dashint_{Q_{n}}  (u{-}k_n)_-^{2(1-p/\kappa)+ p} \varphi_n^{p(1-p/\kappa)+ p} \, d\nu \\
\leq & C \dashint_{T_{n}}\left(\dashint_{B_{n}} (u{-}k_n)_-^2\varphi_n^p\, d\mu\right)^{1-p/\kappa}
\left(\dashint_{B_{n}}((u{-}k_n)_- \varphi_n)^{\kappa}\, d\mu\right)^{p/\kappa}\, dt \\
\leq & C R_{n}^{p}  \left(\esssup_{T_n} \dashint_{B_{n}} (u{-}k_n)_-^2\varphi_n^p \, d\mu\right)^{1-p/\kappa}
\dashint_{Q_n} |\nabla ((u{-}k_n)_-\varphi_n)|^p\,d\nu,
\end{split}
\end{equation}
where $\kappa$ is the Sobolev exponent as in \eqref{kappa}. Notice that this estimate continues to hold for any function given in the appropriate Sobolev space and it is independent of the choice of $Q_n$, modulo a constant.

Taking into account \eqref{minus_upper_estimate} and \eqref{minus_lower_estimate} energy estimate \eqref{l:energyest} in the minus case yields
\begin{align*}
& \qquad k_n^{q-1}\esssup_{T_n}\dashint\limits_{B_n} \! (u{-}k_n)_-^{2}\varphi_n^{p}\,d\mu \,
+\, d_iR_n^p\dashint\limits_{Q_n}\abs{\nabla(u{-}k_n)_-\varphi_n}^{p}\,d\nu \nonumber \\
&\leq C\,d_iR_n^p\dashint\limits_{Q_n}(u{-}k_n)_-^{p}\abs{\nabla \varphi_n}^{p}\,d\nu +C\,d_iR_n^p\, k_n^{q}\dashint\limits_{Q_n}(u{-}k_n)_-\varphi_n^{p-1}
\left(\frac{\partial \varphi_n}{\partial t}\right)_+\,d\nu.
\end{align*}

Using \eqref{bound_level} and \eqref{gradient_estimates} together with the fact that $(u{-}k_n)_- \leq \omega_i$ almost everywhere the above inequality can be rewritten as
\begin{align*}
& \omega_i^{q-1}\esssup_{T_n}\dashint\limits_{B_n} \! (u{-}k_n)_-^{2}\varphi_n^{p}\,d\mu \,
+\, d_iR_n^p\dashint\limits_{Q_n}\abs{\nabla(u{-}k_n)_-\varphi_n}^{p}\,d\nu \nonumber \\
&\leq C2^{np} d_i\brc{\l(\frac{\omega_i}{2}\r)^p + \omega_i^q \frac{\omega_i}{2}\frac{1}{d_i}}\frac{\nu(A_n)}{\nu(Q_n)},
\end{align*}
where
\[
A_{n}=\left\{(x,t)\in Q_{n}:u(x,t)< k_n\right\}.
\]
Thus, by using the definition of $d_i$, we conclude with
\begin{equation}\label{ml_1}
\esssup_{T_n}  \dashint_{B_{n}} (u{-}k_n)_-^2\varphi_n^p \, d\mu \leq
C 2^{np} \l(\frac{\omega_i}{2}\r)^2 \frac{\nu(A_n)}{\nu(Q_n)}
\end{equation}
and
\begin{equation}\label{ml_2}
R_n^p\dashint\limits_{Q_n}\abs{\nabla(u{-}k_n)_-\varphi}^{p}\,d\nu \leq
C 2^{np} \l(\frac{\omega_i}{2}\r)^p \frac{\nu(A_n)}{\nu(Q_n)}.
\end{equation}
To obtain an algebraic estimate, we need to estimate the left hand side of \eqref{HoldSobo temp} from below:
\begin{equation}\label{ml3}
\begin{split}
&\dashint_{Q_{n+1}}(u-k_{n})_-^{2(1-p/\kappa)+ p} \, d\nu   \\
& \quad \geq \dashint_{Q_{n+1}}(u-k_{n})_-^{2(1-p/\kappa)+ p} \chi_{\{(u-k_{n+1})_->0\}} \, d\nu \\
& \quad \geq |k_{n+1}-k_n|^{2(1-p/\kappa)+ p}\int_{Q_{n+1}} \chi_{\{(u-k_{n+1})_->0\}}\, d\nu \\
& \quad \geq  \l(\frac{\omega_i}{2^{n+3}}\r)^{2(1-p/\kappa)+ p}\frac{\nu(A_{n+1})}{\nu(Q_{n+1})}.
\end{split}
\end{equation}
Inserting \eqref{ml_1}, \eqref{ml_2} and \eqref{ml3} into \eqref{HoldSobo temp} arrive at
\begin{equation}\label{main_lemma_1}
\frac{\nu(A_{n+1})}{\nu(Q_{n+1})}\leq C^{n+1}\left(\frac{\nu(A_{n})}{\nu(Q_{n})}\right)^{2-p/\kappa}, \quad n=0,1,2,\dots.
\end{equation}
By setting
\[
Y_n = \frac{\nu(A_n)}{\nu(Q_n)}
\]
we obtain the recursive relation
\[
Y_{n+1} \le C^{n+1} Y_n^{2-p/\kappa},
\]
for some constant $C$ depending only on the data. We conclude, using Lemma~\ref{geometric_convergence}, that if
$$
Y_0\leq C^{-1/(1-p/\kappa)+1-(1-p/\kappa)^2}:=\alpha_0
$$
holds then $Y_n \to 0$, as $n\to \infty$. But since this condition is precisely the assumption of the first alternative \eqref{l:alt1} for the indicated choice of $\alpha_0$, the result follows.
\end{proof}
\end{lemma}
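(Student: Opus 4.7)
The plan is to run a De Giorgi iteration on a sequence of nested cylinders with shrinking truncation levels, feeding the energy estimate from Lemma~\ref{energy} (minus case) into a Hölder–Sobolev sandwich to produce a recursive inequality, and then closing the argument with Lemma~\ref{geometric_convergence}. Concretely, I would define, for $n=0,1,\dots$,
\[
R_n=\tfrac{R_i}{2}+\tfrac{R_i}{2^{n+1}},\qquad
B_n=B(R_n),\qquad T_n=(t^*-d_iR_n^p,t^*),\qquad Q_n=B_n\times T_n,
\]
and the decreasing levels $k_n=\mu_i^-+\tfrac{\omega_i}{4}(1+2^{-n})$, so that $k_n\searrow \mu_i^-+\omega_i/4$. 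Assumption~\eqref{eq:Case I test} gives the crucial two-sided comparison $\omega_i/4\le k_n\le \omega_i$, which will convert the $k_n^{q-1}$ prefactor coming from~\eqref{minus_lower_estimate} into the intrinsic factor $\omega_i^{q-1}$, precisely the one that cancels against $d_i=\omega_i^{q-1}(\omega_i/2)^{2-p}$.

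Next I pick standard cutoffs $\varphi_n\in C_0^\infty(Q_n)$ equal to $1$ on $Q_{n+1}$ with $|\nabla\varphi_n|\lesssim 2^n/R_i$ and $(\partial_t\varphi_n)_+\lesssim 2^{pn}/(d_iR_i^p)$, and write the energy estimate~\eqref{l:energyest} for $(u-k_n)_-$. The sup term is estimated from below by~\eqref{minus_lower_estimate} to produce $k_n^{q-1}(u-k_n)_-^2$, while the time-derivative remainder on the right is estimated from above by~\eqref{minus_upper_estimate} to produce $k_n^q(u-k_n)_-$. This is where the double nonlinearity intervenes: the minus case of $\mathcal{J}$ does not give a clean square, so the right-hand side carries both the $p$-power term $(u-k_n)_-^p|\nabla\varphi_n|^p$ and the first-power term $k_n^q(u-k_n)_-\varphi_n^{p-1}(\partial_t\varphi_n)_+$. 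Using $(u-k_n)_-\le \omega_i$ pointwise and the choice of $d_i$, both contributions collapse into the common factor $2^{np}\,d_i\,(\omega_i/2)^p\,\nu(A_n)/\nu(Q_n)$ with $A_n=\{u<k_n\}\cap Q_n$; dividing by $d_i$ in the sup term yields
\begin{align*}
\esssup_{T_n}\dashint_{B_n}(u-k_n)_-^2\varphi_n^p\,d\mu
&\le C\,2^{np}\Bigl(\tfrac{\omega_i}{2}\Bigr)^{\!2}\frac{\nu(A_n)}{\nu(Q_n)},\\
R_n^p\dashint_{Q_n}|\nabla((u-k_n)_-\varphi_n)|^p\,d\nu
&\le C\,2^{np}\Bigl(\tfrac{\omega_i}{2}\Bigr)^{\!p}\frac{\nu(A_n)}{\nu(Q_n)}.
\end{align*}

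Now I would interpolate these two bounds through the parabolic Sobolev embedding displayed in~\eqref{HoldSobo temp}: apply Hölder in the time slice between $L^2$ and $L^\kappa$, use~\eqref{Sobolevzero} for the spatial $L^\kappa$ norm, and integrate in $t$. On the left, lower-bound the integral of $(u-k_n)_-^{2(1-p/\kappa)+p}$ on $Q_{n+1}$ by the gap $k_{n+1}-k_n\sim \omega_i/2^{n+3}$ on the set $A_{n+1}$, as in~\eqref{ml3}. After the factors $(\omega_i/2)^{2(1-p/\kappa)+p}$ cancel on both sides, the iteration reduces to
\[
Y_{n+1}\le C^{n+1}Y_n^{1+\alpha},\qquad Y_n=\frac{\nu(A_n)}{\nu(Q_n)},\qquad \alpha=1-\tfrac{p}{\kappa}>0,
\]
so that Lemma~\ref{geometric_convergence} forces $Y_n\to 0$ whenever $Y_0$ is smaller than a structural constant $\alpha_0$. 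Since $Y_0$ is exactly the density on the left of~\eqref{l:alt1}, this choice of $\alpha_0$ closes the proof and implies $u>\mu_i^-+\omega_i/4$ a.e.\ on $\tfrac12 Q_{t^*}(d_iR_i^p,R_i)$.

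The main obstacle, which I would address carefully, is the mismatched powers coming from the minus case of $\mathcal{J}$: unlike the plus case in~\cite{KLSU}, we have an $L^1$-type bound~\eqref{minus_upper_estimate} coexisting with the $L^2$-type bound~\eqref{minus_lower_estimate}, so the intrinsic height $d_i=\omega_i^{q-1}(\omega_i/2)^{2-p}$ has to be chosen so that the factor $\omega_i^{q-1}$ absorbs the $k_n^{q-1}$ on the left and, simultaneously, the $k_n^q/d_i$ arising on the right collapses to $(\omega_i/2)^p$. Verifying that both the parabolic and spatial parts of the energy estimate rescale to the same $2^{np}(\omega_i/2)^p$ factor is the key intrinsic-scaling computation that makes the De Giorgi recursion homogeneous and ultimately independent of the (possibly tiny) oscillation $\omega_i$.
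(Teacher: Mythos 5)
Your proposal is correct and follows essentially the same argument as the paper: the same nested cylinders, levels $k_n$, cutoffs, the minus-case energy estimate combined with the bounds \eqref{minus_upper_estimate}--\eqref{minus_lower_estimate}, the parabolic Sobolev interpolation \eqref{HoldSobo temp}, the level-gap lower bound, and Lemma~\ref{geometric_convergence} to fix $\alpha_0$. The intrinsic-scaling cancellation you single out (the factor $\omega_i^{q-1}$ from $k_n^{q-1}$ matching $d_i$, and $k_n^q/d_i$ collapsing to $(\omega_i/2)^p$ up to a constant) is precisely the computation the paper performs.
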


Now we make use of the previous lemma to prove that the set where $u$ is close to
its infimum can be made arbitrarily small within the 
time interval $-\theta<t<0$, where
\begin{equation}\label{theta}
-\theta:=t^*-d_i\l(\frac{R_i}{2}\r)^p
\end{equation}
for some $t^*$ as in \eqref{time_level}.
\begin{lemma}\label{forw_time}
Assume \eqref{l:alt1} holds for some $t^*$ satisfying \eqref{time_level} and let $\theta$ be given as in \eqref{theta}.
For every number $\alpha_1\in(0,1)$, there exists $s_*\in\mathbb{N}$, depending only on the data, such that
$$
\mu\l(\brc{x\in \tfrac{1}{4}B\l(R_i\r)\,:\,u(x,t)<\mu_i^-+\frac{\omega_i}{2^{s_*}}}\r)
\leq \alpha_1\mu\l({\tfrac{1}{4}B\l(R_i\r)}\r), \quad \forall t\in (-\theta,0).
$$
\begin{proof}
In Lemma~\ref{Harnack_logarithm} consider the estimate written for the truncated function $(u-k)_{-}$  over the cylinder $(1/2)Q\l(\theta R_i^p,R_i\r)$  with
$$
k=\mu_i^-+\frac{\omega_i}{4},\qquad c=\frac{\omega_i}{2^{n+2}}\qquad\text{and} \qquad H_k^-=\frac{\omega_i}{4}.
$$
Here $n\in \mathbb{N}$ will be determined later depending on the data.
Observe that, since $u(x,-\theta)>k$ in $(1/2)B\l(R_i\r)$ by Lemma \ref{main_lemma1}, we have
$$
\psi_-(u)(x,-\theta)=0, \quad x\in \tfrac{1}{2}B\l(R_i\r).
$$
On the other hand, since $(u-k)_-\leq\omega_i/4$ we get
$$
\psi_-(u)\leq\ln{\l(\frac{2^{-2}\omega_i}{2^{-(n+2)}\omega_i}\r)}=n\ln 2
$$
and
$$
\abs{(\psi_-)^{'}(u)}^{2-p}=\l(H_k^- -(u-k)_- +c\r)^{p-2}\leq\l(\frac{\omega_i}{2}\r)^{p-2}.
$$
Take cutoff function $\varphi\in C^{\infty}_0\l((1/2)B\l(R_i\r)\r)$ independent of time with properties $0\leq\varphi\leq1$,
$\varphi=1\in (1/4)B\l(R_i\r)$ and
$$
\abs{\nabla\varphi}\leq\frac{C}{R_i}. $$
In the set
$$
\brc{x\in\tfrac{1}{4}B\l(R_i\r)\,:\,u<\mu_i^-+\frac{\omega_i}{2^{n+2}}}
$$
$\psi_-(u)$ can be bounded from below as
$$
\psi_-(u)\geq\ln {\l(\frac{2^{-2}\omega_i}{2^{-(n+2)}\omega_i+2^{-(n+2)}\omega_i}\r)}\geq (n-1)\ln 2.
$$

Gathering these estimates all together and recalling that $k\leq\omega_i/2$, due to \eqref{eq:Case I test}, we arrive at
\begin{equation*}
\begin{split}
\omega_i^{q-1}(n-1)^2&(\ln 2)^2 \mu\l(\brc{x\in \tfrac{1}{4}B\l(R_i\r)\,:\,u<\mu_i^-+\frac{\omega_i}{2^{n+2}}}\r)\\
&\leq k^{q-1}\esssup_{-\theta<t<0} \int_{\frac{1}{2}B\l(R_i\r)} \psi_-^2(u)\varphi\,d\mu\\
&\leq C \int_{-\theta}^{0}\int_{\tfrac{1}{2}B\l(R_i\r)} \psi_-(u)\abs{(\psi_-)^{'}(u)}^{2-p}\abs{\nabla\varphi}^p\,d\nu\\
& \leq C\,n\,(\ln 2) \frac{1}{R_i^p}\theta\mu\l(\tfrac{1}{2}B\l(R_i\r)\r) \\
& \leq C\,n\,(\ln 2)\l(\frac{1}{2^\lambda}\r)^{2-p}\omega_i^{q-1}\mu\l(\tfrac{1}{4}B\l(R_i\r)\r).
\end{split}
\end{equation*}
In the last inequality we used the fact $\theta\leq c_iR_i^p$ and the doubling property of the measure. A simplification in the above inequality gives
\begin{equation*}
\mu\l(\brc{x\in \tfrac{1}{4}B\l(R_i\r)\,:\,u<\mu_i^-+\frac{\omega_i}{2^{n+2}}}\r)\leq C\,\frac{n}{(n-1)^2}2^{\lambda(p-2)}\mu\l(\tfrac{1}{4}B\l(R_i\r)\r).
\end{equation*}
To conclude, we choose $s_*=n+2$
with $n>1+\frac{2C}{\alpha_1}2^{\lambda(p-2)}$.
\end{proof}
\end{lemma}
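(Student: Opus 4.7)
The plan is to forward the pointwise lower bound produced by Lemma~\ref{main_lemma1} at time $t=-\theta$ to all later times by means of the minus-case logarithmic estimate of Lemma~\ref{Harnack_logarithm}, applied on the cylinder $\tfrac{1}{2}Q(\theta R_i^p, R_i)$. The point is that $\psi_-$ will be chosen so that it vanishes on the initial time slice, killing the boundary integral on the right-hand side, leaving only the spatial gradient term which one can control by a straightforward computation.

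Concretely, I would take
$$
k=\mu_i^-+\frac{\omega_i}{4},\qquad H_k^-=\frac{\omega_i}{4},\qquad c=\frac{\omega_i}{2^{n+2}},
$$
with $n\in\N$ to be chosen, and a time-independent cutoff $\varphi\in C_0^\infty(\tfrac{1}{2}B(R_i))$ with $\varphi\equiv 1$ on $\tfrac{1}{4}B(R_i)$ and $|\nabla\varphi|\le C/R_i$. Lemma~\ref{main_lemma1} then guarantees $u(x,-\theta)>k$ on $\tfrac{1}{2}B(R_i)$, so $\psi_-(u)(\cdot,-\theta)\equiv 0$ there. The general bound $(u{-}k)_-\le \omega_i/4$ yields $\psi_-(u)\le n\ln 2$ and $|(\psi_-)'(u)|^{2-p}\le(\omega_i/2)^{p-2}$, while on the target set $\{x\in\tfrac{1}{4}B(R_i):u(x,t)<\mu_i^-+\omega_i/2^{n+2}\}$ the function $\psi_-(u)$ is bounded below by $(n-1)\ln 2$.

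Feeding these estimates into Lemma~\ref{Harnack_logarithm}, together with $\theta\le c_iR_i^p$, $c_i=\omega_i^{q-1}(\omega_i/2^\lambda)^{2-p}$, and the doubling property, I expect to arrive at
$$
\omega_i^{q-1}(n{-}1)^2(\ln 2)^2\,\mu\!\l(\brc{x\in\tfrac{1}{4}B(R_i):u(x,t)<\mu_i^-{+}\tfrac{\omega_i}{2^{n+2}}}\r)\le C\,n\,2^{\lambda(p-2)}\omega_i^{q-1}\mu\l(\tfrac{1}{4}B(R_i)\r),
$$
uniformly in $t\in(-\theta,0)$. Dividing through and choosing $n$ with $n>1+\tfrac{2C}{\alpha_1}2^{\lambda(p-2)}$ gives the conclusion with $s_*=n+2$.

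The delicate point I expect is the accounting of powers of $\omega_i$: the parabolic term of the logarithmic estimate carries $k^{q-1}$ on the left and $u^{q-1}$ on the right, and only because assumption~\eqref{eq:Case I test} gives $\omega_i/4\le k\le \omega_i/2$ can both sides be pinned down by $\omega_i^{q-1}$ (up to constants). This is precisely what makes the factor $\omega_i^{q-1}$ cancel between the two sides of the displayed inequality, leaving a ratio that depends only on the data and $\lambda$. Had $\mu_i^-$ been comparable to $\omega_i$ instead (the regime excluded by \eqref{eq:Case I test} and handled separately in Subsection~\ref{subsec3}), this cancellation would not be available, and no choice of $n$ would render the right-hand side independent of $\omega_i$.
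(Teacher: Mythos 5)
Your proposal follows the paper's own proof essentially verbatim: the same choices of $k$, $H_k^-$, $c$, the same time-independent cutoff, the vanishing of $\psi_-$ at $t=-\theta$ via Lemma~\ref{main_lemma1}, the same upper and lower bounds on $\psi_-$ and $|(\psi_-)'|^{2-p}$, and the cancellation of $\omega_i^{q-1}$ through $k\le\omega_i/2$ (from~\eqref{eq:Case I test}) on the left and $\theta\le c_iR_i^p$ on the right, leading to the same choice $s_*=n+2$ with $n>1+\frac{2C}{\alpha_1}2^{\lambda(p-2)}$. This is correct and matches the paper's argument.
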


The information of this lemma will be used to show that an estimate similar to the conclusion of Lemma \ref{main_lemma1} holds in a full cylinder that includes the origin.
\begin{lemma}
Assume \eqref{l:alt1} holds for some $t^*$ satisfying \eqref{time_level} and let $\theta$ be given as in \eqref{theta}. There exists $s_1\in\mathbb{N}$, depending only on the data, such that
$$
u(x,t)>\mu_i^- + \frac{\omega_i}{2^{s_1+1}}, \qquad \text{a.e.} \quad \text{in} \quad Q\l(\theta,\frac{R_i}{8}\r).
$$
\begin{proof}
Let
\[
R_n=\frac{R_i}{8}+\frac{R_i}{2^{n+3}},
\qquad Q_{n}=Q\l(\theta,R_n\r)
\]
and
\[
k_n = \mu_i^- + \frac{\omega_i}{2^{s_1+1}}\left(1 + \frac1{2^{n}}  \right)
\]
for $n=0,1,\dots$. Take cutoff functions $\varphi_n(x) \in C_0^\infty(B_n)$, where $B_n=B(R_n)$, vanishing on the boundary of $B_n$, and such that $0\le\varphi_n\le 1$,
$\varphi_n=1$ in $B_{n+1}$,
\begin{align*}
|\nabla \varphi_n|\le\frac{C2^{n}}{ R_i}.
\end{align*}
Observe that $k_n\leq\omega_i$ by assumption~\eqref{eq:Case I test} and $u(x,-\theta)>\mu_i+\omega_i/4\geq k_n$ in $B_n\subset (1/2)B(R_i)$ by Lemma~\ref{main_lemma1} which implies $(u(x,-\theta)-k_n)_-=0$ in $B_n$.
Using these estimates together with \eqref{minus_upper_estimate} and \eqref{minus_lower_estimate} in the energy estimate \eqref{l:energyest} written for $(u-k_n)_-$ we arrive at
\begin{align*}\label{eq:energy_l2}
&\omega_i^{q-1}\esssup_{-\theta<t<0}\dashint\limits_{B_n} \! (u{-}k_n)_-^{2}\varphi_n^{p}\,d\mu \,
+\, \theta\dashint\limits_{Q_n}\abs{\nabla(u{-}k_n)_-\varphi}^{p}\,d\nu \nonumber \\
&\leq C\,\theta\dashint\limits_{Q_n}(u{-}k_n)_-^{p}\abs{\nabla \varphi_n}^{p}\,d\nu.
\end{align*}
We estimate further
$$
(u-k_n)_-\leq\frac{\omega_i}{2^{s_1}} \quad \text{and} \quad \theta\leq c_iR_i^p
$$
and invoke the estimate on $|\nabla \varphi_n|$ to arrive at
$$
\esssup_{-\theta<t<0}\dashint\limits_{B_n} \! (u{-}k_n)_-^{2}\varphi_n^{p}\,d\mu\leq C\,2^{np}\l(\frac{\omega_i}{2^{s_1}}\r)^2 2^{(\lambda-s_1)(p-2)}\frac{\nu\l(A_n\r)}{\nu\l(Q_n\r)}
$$
and
$$
R_n^p\dashint\limits_{Q_n}\abs{\nabla(u{-}k_n)_-\varphi}^{p}\,d\nu\leq  C\,2^{np}\l(\frac{\omega_i}{2^{s_1}}\r)^p\frac{\nu\l(A_n\r)}{\nu\l(Q_n\r)},
$$
where $A_n=\left\{(x,t)\in Q_{n}:u(x,t)< k_n\right\}$.
Note that estimate~\eqref{HoldSobo temp} continues to hold in the setting of this lemma and, by the same kind of reasoning as in the proof of Lemma \ref{main_lemma1}, the left hand side can be estimated from below by
$$
\dashint_{Q_{n+1}}(u-k_{n})_-^{2(1-p/\kappa)+ p} \, d\nu \geq  \l(\frac{\omega_i}{2^{s_1+n+2}}\r)^{2(1-p/\kappa)+ p}\frac{\nu(A_{n+1})}{\nu(Q_{n+1})}.
$$
Substituting these last three estimates in~\eqref{HoldSobo temp} and assuming that $s_1>\lambda$, we obtain
$$
\frac{\nu(A_{n+1})}{\nu(Q_{n+1})}\leq C^{n+1}\l(\frac{\nu\l(A_n\r)}{\nu\l(Q_n\r)}\r)^{2-p/\kappa}.
$$
We set
\[
Y_n = \frac{\nu(A_n)}{\nu(Q_n)},
\]
as before, and rephrase the above inequality as
\[
Y_{n+1} \le C^{n+1} Y_n^{2-p/\kappa}.
\]
By defining
$$\alpha_1 = C^{-1/(1-p/\kappa)+1-(1-p/\kappa)^2}$$
we conclude, using Lemma~\ref{geometric_convergence}, that if $Y_0\leq\alpha_1$ holds then $Y_n \to 0$ as $n\to \infty$. Now we apply Lemma~\ref{forw_time} with such an $\alpha_1$ and conclude there exists $s_*=:s_1$, depending only on the data, such that
$$
\mu\l(\brc{x\in \tfrac{1}{4}B\l(R_i\r)\,:\,u(x,t)<\mu_i^-+\frac{\omega_i}{2^{s_1}}}\r)
\leq \alpha_1\mu\l({\tfrac{1}{4}B\l(R_i\r)}\r), \quad \forall t\in (-\theta,0),
$$
which is exactly $Y_0\leq\alpha_1$. The result is concluded since $Y_n \to 0$ as $n\to \infty$ implies that $A_n \to 0$ as $n\to \infty$.
\end{proof}
\end{lemma}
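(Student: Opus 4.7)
The plan is to run a second De Giorgi iteration, this time over a cylinder that includes the origin, with the truncations decreasing to the much smaller level $\mu_i^- + \omega_i/2^{s_1+1}$. The crucial new input compared with Lemma~\ref{main_lemma1} is that the analysis is now done over the \emph{full} time interval $(-\theta,0)$, and the starting smallness of $Y_0$ must be supplied by Lemma~\ref{forw_time} rather than by hypothesis~\eqref{l:alt1}.

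Concretely, I would fix $s_1\in\mathbb{N}$ (to be pinned down at the end), define the shrinking radii $R_n=R_i/8+R_i/2^{n+3}$, the cylinders $Q_n=Q(\theta,R_n)$, and the decreasing levels $k_n=\mu_i^-+(\omega_i/2^{s_1+1})(1+2^{-n})$. I would then pick cutoff functions $\varphi_n\in C_0^\infty(B_n)$ which are \emph{time-independent} and satisfy $\varphi_n=1$ on $B_{n+1}$ with $|\nabla\varphi_n|\le C2^n/R_i$; this is the key point, because at the bottom slice $t=-\theta$ Lemma~\ref{main_lemma1} gives $u(\cdot,-\theta)>\mu_i^-+\omega_i/4\ge k_n$ on $\tfrac12 B(R_i)\supset B_n$, so $(u-k_n)_-(\cdot,-\theta)\equiv 0$ and the time boundary contribution in the energy estimate vanishes. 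There is therefore no $\partial_t\varphi$ term to control, which is what allows the argument to be closed on the full time interval.

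Feeding this into the energy estimate~\eqref{l:energyest} with the bounds~\eqref{minus_upper_estimate}--\eqref{minus_lower_estimate}, and using $k_n\le\omega_i$ (from~\eqref{eq:Case I test}), $(u-k_n)_-\le\omega_i/2^{s_1}$ and $\theta\le c_iR_i^p$, one obtains (as displayed in the companion lemmas) the two controls
\[
\esssup_{-\theta<t<0}\dashint_{B_n}(u-k_n)_-^2\varphi_n^p\,d\mu
\le C\,2^{np}\bigl(\omega_i/2^{s_1}\bigr)^{2}\,2^{(\lambda-s_1)(p-2)}\,Y_n,
\]
\[
R_n^p\dashint_{Q_n}|\nabla(u-k_n)_-\varphi_n|^p\,d\nu
\le C\,2^{np}\bigl(\omega_i/2^{s_1}\bigr)^{p}\,Y_n,
\]
where $Y_n=\nu(A_n)/\nu(Q_n)$ and $A_n=\{u<k_n\}\cap Q_n$. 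Plugging these into the Hölder--Sobolev inequality~\eqref{HoldSobo temp}, bounding its left-hand side from below by $(\omega_i/2^{s_1+n+2})^{2(1-p/\kappa)+p}\,Y_{n+1}$ exactly as in~\eqref{ml3}, and \emph{imposing $s_1>\lambda$} so that the factor $2^{(\lambda-s_1)(p-2)(1-p/\kappa)}$ is harmless, yields
\[
Y_{n+1}\le C^{n+1}\,Y_n^{2-p/\kappa}.
\]

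Finally, I would set $\alpha_1:=C^{-1/(1-p/\kappa)+1-(1-p/\kappa)^2}$ and apply Lemma~\ref{forw_time} with \emph{this} $\alpha_1$ to produce an $s_*\in\mathbb{N}$, depending only on the data, with
\[
\mu\bigl(\{x\in\tfrac14 B(R_i):u(x,t)<\mu_i^-+\omega_i/2^{s_*}\}\bigr)\le \alpha_1\mu(\tfrac14 B(R_i)),\qquad t\in(-\theta,0),
\]
i.e.\ $Y_0\le\alpha_1$; taking $s_1:=\max\{s_*,\lambda+1\}$ (still data-dependent) Lemma~\ref{geometric_convergence} gives $Y_n\to 0$, so $\nu(A_\infty)=0$ and the desired pointwise lower bound holds on $Q(\theta,R_i/8)$. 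The main obstacle, and the reason the argument requires Lemma~\ref{forw_time} as an intermediate step, is that one cannot directly obtain smallness of $Y_0$ from the first-alternative hypothesis~\eqref{l:alt1}: the latter only controls a measure proportion on a subcylinder of height $d_iR_i^p$, whereas here we need control at every time in $(-\theta,0)$; the logarithmic estimate propagates that information forward in time, which is exactly what Lemma~\ref{forw_time} encapsulates.
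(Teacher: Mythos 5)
Your proposal is correct and follows essentially the same route as the paper: time-independent cutoffs so that Lemma~\ref{main_lemma1} kills the initial-time term, the energy estimate combined with~\eqref{HoldSobo temp} to get the recursion $Y_{n+1}\le C^{n+1}Y_n^{2-p/\kappa}$, and Lemma~\ref{forw_time} with $\alpha_1$ as in Lemma~\ref{geometric_convergence} to supply $Y_0\le\alpha_1$. Your choice $s_1:=\max\{s_*,\lambda+1\}$ is a slightly cleaner way of securing the condition $s_1>\lambda$ (legitimate, since enlarging $s$ only shrinks the bad set), but otherwise the argument is the paper's.
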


We finally reach the reduction of the oscillation of the weak solution in the case of the first alternative.

\begin{corollary}\label{cor:Case I conclusion}
Suppose that~\eqref{eq:u between mu(i)pm} and ~\eqref{eq:Case I test} hold in $Q^i=Q(c_iR_i^p,R_i)$. Assume also that \eqref{l:alt1} is verified. Then
there is a constant $\sigma_I \in (0,1)$, depending only on the data, such that
\begin{equation}\label{shrink_2}
    \essosc_{\frac{1}{8}Q\l(d_i R_i^p, R_i\r)}{u} \leq \sigma_I \omega_{i}.
\end{equation}
\begin{proof}
Observe that
$$
d_i \l(\frac{R_i}{8}\r)^p\leq \theta=-t^*+ d_i\l(\frac{R_i}{2}\r)^p
$$
since $t^*<0$. Therefore
$$
\essinf_{\frac{1}{8}Q\l(d_i R_i^p, R_i\r)} u
\geq \essinf_{Q\l(\theta,\frac{R_i}{8}\r)} u \geq \mu_i^-+\frac{\omega_i}{2^{s_1+1}}.
$$
Put
\[
\mu_{i+1}^+=\mu_i^+ \qquad \textrm{and} \qquad \mu_{i+1}^-=\mu_i^-+\frac{\omega_i}{2^{s_1+1}}.
\]
Then we have
$$
\essosc_{\frac{1}{8}Q\l(d_i R_i^p, R_i\r)}{u} \leq \mu_{i+1}^+ - \mu_{i+1}^- = \l(1-\frac{1}{2^{s_1+1}}\r) \omega_{i}
$$
and the corollary follows with $\sigma_I=1-\frac{1}{2^{s_1+1}}\in (3/4,1)$.
\end{proof}
\end{corollary}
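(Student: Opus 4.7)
The plan is to simply package the previous lemma (the strict positivity of $u$ on the full cylinder $Q(\theta,R_i/8)$) into an oscillation reduction. Since the infimum bound on $u$ has already been pushed forward in time and upgraded to a genuine pointwise lower bound, the oscillation will shrink automatically once we verify a geometric inclusion of cylinders and appropriately redefine $\mu_{i+1}^{\pm}$.

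First I would check that $\tfrac{1}{8}Q(d_iR_i^p,R_i)=Q\bigl(d_i(R_i/8)^p,R_i/8\bigr)$ is contained in $Q(\theta,R_i/8)$. By definition $\theta=-t^{*}+d_i(R_i/2)^p$, and since $t^{*}<0$ (by the admissible range~\eqref{time_level}), we have
\[
\theta \;\geq\; d_i\Bigl(\frac{R_i}{2}\Bigr)^{p} \;>\; d_i\Bigl(\frac{R_i}{8}\Bigr)^{p},
\]
so the inclusion holds. Consequently, the preceding lemma yields
\[
\essinf_{\frac{1}{8}Q(d_iR_i^p,R_i)} u \;\geq\; \essinf_{Q(\theta,R_i/8)} u \;\geq\; \mu_i^{-}+\frac{\omega_i}{2^{s_1+1}},
\]
with $s_1$ depending only on the data.

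Next I would define the new oscillation envelope by keeping the supremum side unchanged and raising the infimum side:
\[
\mu_{i+1}^{+}:=\mu_i^{+}, \qquad \mu_{i+1}^{-}:=\mu_i^{-}+\frac{\omega_i}{2^{s_1+1}}.
\]
Then
\[
\essosc_{\frac{1}{8}Q(d_iR_i^p,R_i)} u \;\leq\; \mu_{i+1}^{+}-\mu_{i+1}^{-} \;=\; \Bigl(1-\frac{1}{2^{s_1+1}}\Bigr)\omega_i,
\]
so the corollary follows with $\sigma_I:=1-2^{-(s_1+1)}\in(3/4,1)$, which depends only on the data through $s_1$.

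There is really no hard step here: the heavy lifting was already done in Lemma~\ref{main_lemma1} (reducing the infimum over a subcylinder whose time-slab does not touch $t=0$) and in Lemmas~\ref{forw_time} and the subsequent DeGiorgi-type iteration (forwarding that information in time up to $t=0$). The only thing to be a bit careful about is the time-scale bookkeeping—in particular that $-\theta$ is negative enough that $Q(\theta,R_i/8)$ really does sit above $t=-d_i(R_i/8)^p$—but this is precisely guaranteed by the choice $t^{*}<0$ together with the definition of $\theta$. I would therefore expect the proof to be just a few lines consolidating these observations.
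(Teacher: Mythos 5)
Your proposal is correct and follows essentially the same route as the paper: the same inclusion $\tfrac{1}{8}Q(d_iR_i^p,R_i)\subset Q(\theta,R_i/8)$ from $t^*<0$, the same redefinition of $\mu_{i+1}^{\pm}$, and the same constant $\sigma_I=1-2^{-(s_1+1)}$.
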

\smallskip

\subsection{Reduction of the oscillation in the second alternative}

Now we analyze the second alternative. Assume~\eqref{l:alt2} holds for all cylinders of the type $Q_{t^*}\left(d_{i}R_i^{p},R_i\right)$, where $t^*$ is as in~\eqref{time_level}.
\medskip

Fix a cylinder $Q_{t^*}\left(d_{i}R_i^{p},R_i\right)$. We deduce from~\eqref{l:alt2} that there exists a time level $t^0 \in\left(t^* - d_iR_i^{p},t^*-\frac{\alpha_0}{2}d_iR_i^{p}\right)$ such that
\be\label{l:alt2ineq}
\mu\left(\left\lbrace x\in B(R_i):\,u(x,t^{0})>\mu_{i}^{+}-\omega_{i}/2\right\}\right)
\leq\left(\frac{1-\alpha_0}{1-\alpha_0/2}\right)\mu(B(R_i)).
\ee
In fact, if~\eqref{l:alt2ineq} is violated for all $\left(t^* - d_iR_i^{p},t^*-\frac{\alpha_0}{2}d_iR_i^{p}\right)$ we would get
\begin{align*}
&\nu\left( \left\lbrace (x,t) \in Q_{t^*}\left(d_iR^{p},R_i\right):\, u(x,t)>\mu^{+}_{i}-\omega_{i}/2\right\}\right) \\
&\qquad \geq \int_{t^*-d_iR_i^{p}}^{t^*-(\alpha_0/2)d_iR_i^{p}} \! \mu\left(\left\lbrace x \in B(R_i) \ :\ u(x,t)>\mu^{+}_{i}-\omega_{i}/2\right\}\right) \, dt\\
& \qquad > (1-\alpha_0)\nu(Q\left(d_iR_i^{p},R_i \right)),
\end{align*}
\noindent which contradicts~\eqref{l:alt2}.

The next lemma asserts that a similar property still holds for all time levels in an interval up to the origin.

\begin{lemma}\label{logarithmic_bound}
Assume~\eqref{l:alt2} is verified. There exists $s$, depending only upon the data, such that
\[
\mu\left(\left\{x\in B(R_i)\ : \ u(x,t) > \mu_{i}^+ - 2^{-(s+s_o+1)} \omega_{i} \right\}\right)\le\left(1-\frac{\alpha_0 }{4} \right)\mu(B(R_i)),
\]
for almost every $t\in\left(-(c_i/2)R_i^{p},0\right)$.
\begin{proof}
Set $Q:=B(R_i)\times (t^0,t^*)$ and let
\[
c=\l(\frac{\omega_{i}}{2^{s+s_o+1}}\r)^q, \qquad k = \l(\mu_{i}^+\r)^q - \l(\frac{\omega_{i}}{2^{s_o}}\r)^q
, \qquad
H_k^+= \l(\frac{\omega_{i}}{2^{s_o}}\r)^q,
\]
where $s_o$ is such that
$$ \l(\mu_{i}^+\r)^q - \l(\frac{\omega_{i}}{2^{s_o}}\r)^q > \l(\mu_{i}^+-\frac{\omega_{i}}{2}\r)^q .$$

We shall use Lemma~\ref{Harnack_logarithm} to forward the information involved in~\eqref{l:alt2ineq} in time.
Set $v=u^q$ and recall the definition
\[
\psi_+(v)=\Psi(H_k^+,(v{-}k)_+,c)=\ln^+\l(\frac{H_k^+}{c+H_k^+-(v{-}k)_+}\r).
\]
From $(v-k)_+ \leq \l(\omega_i/2^{s_o}\r)^q \equiv H_k^+$ we obtain
\[
\psi_+(v)\le \ln\frac{\l(\frac{\omega_{i}}{2}\r)^q}{\l(\frac{\omega_i}{2^{s+s_o+1}}\r)^q}= (s+s_o)q\ln 2.
\]
We continue with the estimate
\[
|\psi'_+(v)|^{2-p}\le \left( H_k^+ +c \right)^{p-2} \le C(p,q)\left(\frac{\omega_{i}}{2^{s_o}}\right)^{q(p-2)}.
\]
Denote $B:=B(R_i)$ and let $\varphi\in C_0^\infty(B)$ be a time-independent cutoff function satisfying $0\le\varphi\le 1$, $\varphi=1$ in $(1-\delta)B$ and
$$
|\nabla \varphi|\le \frac{C}{\delta R_i},
$$
where $0<\delta<1$ is to be determined later.
Define
$$
S=\brc{x\in(1-\delta)B \ : \ v>\l(\mu_i^+\r)^q-\l(\frac{\omega_i}{2^{s+s_o+1}}\r)^q}
$$
and
$$
S'=\brc{x\in(1-\delta)B \ : \ v>\l(\mu_i^+ - \frac{\omega_i}{2^{s+ s_o+1}}\r)^q}.
$$
Observe that $S'\subset S$, for $q\in(0,1)$.

In the set $S$ we get
\begin{align*}
\psi_+(v)\geq\ln \frac{\l(\frac{\omega_i}{2^{s_o}}\r)^{q}}{2\l(\frac{\omega_i}{2^{s+s_o+1}}\r)^{q}}
\geq ((s+1)q-1) \ln 2.
\end{align*}
Now we apply Lemma~\ref{Harnack_logarithm} in the plus case with these choices to conclude
\begin{equation}\notag
\begin{split}
((s+1)q-1)^2 (\ln2)^2 \mu(S) & \le\esssup_{t^0<t<t^*}\int_{B} \psi_+^2(v)(x,t)\varphi^p(x) \, d\mu \\
& \le \int_{B}\psi_+^2(v)(x,t^0)\varphi^p(x) \, d\mu \\
&\quad +C \int_{t^0}^{t^*}\int_{B}v^{\frac{(1-q)(p-1)}{q}}\psi_+(v)|(\psi_+)^\prime(v)|^{2-p}|\nabla  \varphi|^p \, d\nu \\
& \le ((s+s_o)q)^2 (\ln2)^2  \frac{1-\alpha_0}{1-\alpha_0/2}\mu(B)\\
& \quad +C \omega_i^{(1-q)(p-1)+q(p-2)}\frac{(s+s_o)q \ln 2}{\delta^pR_i^p}\abs{t^*-t^0} \mu(B),
\end{split}
\end{equation}
for almost every $t\in(t^0,t^*)$. Here in the third inequality we used~\eqref{l:alt2ineq} and
$$
v=u^q\leq (2\omega_i)^q
$$
which follows from~\eqref{eq:Case I test}. Using the facts $\abs{t^*-t^0}\leq d_iR_i^p$ and $S'\subset S$ we reach the estimate
$$
\mu(S')\leq\mu(S)\leq \frac{((s+s_o)q)^2}{((s+1)q-1)^2} \frac{1-\alpha_0}{1-\alpha_0/2}\mu(B)+C \frac{(s+s_o)q}{((s+1)q-1)^2}\frac{1}{\delta^p}\mu(B).
$$
On the other hand, by the annular decay property \eqref{annular_decay}, we have
\begin{align*}
& \mu(\{ x\in B  \ :\ v(x,t)> \big(\mu_{i}^+- 2^{-(s+s_o+1)} \omega_{i}\big)^q \}) \\
& \le\mu(B \setminus (1-\delta) B)+\mu(\{x \in (1-\delta)B) \ : \ v(x,t)> \big(\mu_{i}^+- 2^{-(s+s_o+1)}\omega_{i}\big)^q \})\\
& \le C\delta^\alpha\mu(B)+\mu(S'),
\end{align*}
for almost every $t\in(t^0,t^*)$. For the first term, we choose $\delta$ small enough so that
\[
C\delta^\alpha<\frac{\alpha_0}{24}
\]
and for the second term we use the previous estimate. By choosing $s$ large enough so that
\[
\frac{1-\alpha_0}{1-\alpha_0/2} \, \frac{((s+s_o)q)^2}{((s+1)q-1)^2}
\le 1-\frac{\alpha_0}{3}
\]
and
\[
\frac{C(s+s_o)q}{\delta^p((s+1)q-1)^2}
\le\frac{\alpha_0}{24}
\]
hold, we get
\be\label{eq:forw_conc}
\mu\l(\brc{x\in B  \ :\ v(x,t)> \l(\mu_{i}^+- 2^{-(s+s_o+1)} \omega_{i}\r)^q }\r)\leq1-\frac{\alpha_0}{4}
\ee
for almost every $t\in(t^0,t^*)$.

Since~\eqref{l:alt2ineq} holds for all cylinders of type $Q_{t^*}(d_iR_i^p,R_i)$, the conclusion~\eqref{eq:forw_conc} holds for all time levels
$$
t\geq -(c_i-d_i)R_i^p-\frac{\alpha_0}{2}d_iR_i^p.
$$
By choosing
\be\label{asp_lambda_1}
2^{(\lambda-1)(p-2)}\geq2
\ee
we get $c_i/d_i\geq 2-\alpha_0,$ which implies
$$
-(c_i-d_i)R_i^p-\frac{\alpha_0}{2}d_iR_i^p\leq-\frac{c_i}{2}R_i^p.
$$

\end{proof}
\end{lemma}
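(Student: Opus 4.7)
The plan is to use the plus case of the logarithmic estimate in Lemma \ref{Harnack_logarithm}, written for $v=u^q$, to propagate the distribution bound \eqref{l:alt2ineq} forward from $t^0$ in time. Since the estimate is for $v$ rather than $u$, I will first exploit the concavity of $\xi\mapsto\xi^q$ for $q\in(0,1)$ to observe that $\{u>\mu_i^+-\omega_i/2^{s+s_o+1}\}$ is contained in $\{v>(\mu_i^+)^q-(\omega_i/2^{s+s_o+1})^q\}$, so it suffices to estimate the measure of the latter.

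For the parameters of $\psi_+$, I choose $H_k^+=(\omega_i/2^{s_o})^q$ with $s_o$ fixed so that $(\mu_i^+)^q-H_k^+>(\mu_i^+-\omega_i/2)^q$, then set $k=(\mu_i^+)^q-H_k^+$ and $c=(\omega_i/2^{s+s_o+1})^q$ for a large integer $s$ to be determined. Direct computation gives $\psi_+(v)\le(s+s_o)q\ln 2$ pointwise, $|\psi_+'(v)|^{2-p}\le C(\omega_i/2^{s_o})^{q(p-2)}$, and $\psi_+(v)\ge((s+1)q-1)\ln 2$ on the superlevel set
\[
S=\{x\in(1-\delta)B(R_i)\,:\,v(x,t)>(\mu_i^+)^q-(\omega_i/2^{s+s_o+1})^q\}
\]
for a small parameter $\delta>0$. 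With a time-independent cutoff $\varphi$ equal to $1$ on $(1-\delta)B(R_i)$ satisfying $|\nabla\varphi|\le C/(\delta R_i)$, Lemma \ref{Harnack_logarithm} then produces
\[
((s+1)q-1)^2\mu(S)\le ((s+s_o)q)^2\frac{1-\alpha_0}{1-\alpha_0/2}\mu(B(R_i))+\frac{C(s+s_o)q}{\delta^p}\mu(B(R_i)),
\]
where in the gradient term the $\omega_i$-dependent factors cancel: the bound $v\le(2\omega_i)^q$ (from \eqref{eq:Case I test}), the length $|t^*-t^0|\le d_iR_i^p$, and the identity $(1-q)(p-1)+q(p-2)=p-1-q$ combine so that $v^{(1-q)(p-1)/q}|\psi_+'|^{2-p}d_i$ is a pure constant, leaving only $1/\delta^p$.

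The final step is to pass from $(1-\delta)B(R_i)$ to $B(R_i)$ using the $\alpha$-annular decay property \eqref{annular_decay}, choosing $\delta$ small enough that $C\delta^\alpha\le\alpha_0/24$, and then choosing $s$ large enough that $((s+s_o)q/((s+1)q-1))^2(1-\alpha_0)/(1-\alpha_0/2)\le 1-\alpha_0/3$ and $C(s+s_o)q/(\delta^p((s+1)q-1)^2)\le\alpha_0/24$; both are possible since the first ratio tends to $(1-\alpha_0)/(1-\alpha_0/2)<1$ and the second to $0$ as $s\to\infty$. These three contributions add to at most $1-\alpha_0/4$, yielding the claimed bound for $t\in(t^0,t^*)$. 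To extend to the full range $(-(c_i/2)R_i^p,0)$, I apply the same argument to every admissible $Q_{t^*}(d_iR_i^p,R_i)\subset Q^i$ furnished by \eqref{l:alt2}; the union of the resulting time slabs reaches down to $-(c_i-d_i)R_i^p-(\alpha_0/2)d_iR_i^p$, which lies below $-(c_i/2)R_i^p$ as soon as $c_i/d_i\ge 2-\alpha_0$, i.e.\ $2^{(\lambda-1)(p-2)}\ge 2$, and this is the first concrete lower bound on $\lambda$ imposed by the analysis.

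The main obstacle I anticipate is the exponent bookkeeping in the gradient term: the factor $v^{(1-q)(p-1)/q}$ introduced by the plus-case logarithmic estimate must mesh exactly with the intrinsic scale $d_i=\omega_i^{q-1}(\omega_i/2)^{2-p}$ and the length $|t^*-t^0|\le d_iR_i^p$, so that no residual power of $\omega_i$ survives and the estimate becomes genuinely scale-invariant. Once that cancellation is verified, the remaining balancing act between $s$ (which must overcome both $(1-\alpha_0)/(1-\alpha_0/2)$ and the $1/\delta^p$ coefficient) and $\delta$ (already pinned down by the annular-decay inequality) is largely mechanical but must be carried out in the correct order to keep all dependences on data alone.
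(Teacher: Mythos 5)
Your proposal is correct and follows essentially the same path as the paper: the same choices of $H_k^+$, $k$, $c$ and $s_o$, the same concavity/subadditivity observation $S'\subset S$ to pass from $u$ to $v=u^q$, the same application of the plus-case logarithmic estimate with \eqref{l:alt2ineq} controlling the initial slice, the same exact cancellation of $\omega_i$-powers via $(1-q)(p-1)+q(p-2)=p-1-q$ against $d_i$, and the same annular-decay plus large-$s$ balancing and the covering-in-time argument with $2^{(\lambda-1)(p-2)}\ge 2$. No gaps to report.
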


Next, by using the information of the previous lemma we will prove a critical estimate which states that, within a cylinder around the origin, the set where $u$ is close to its supremum can be made arbitrarily small.

\begin{lemma}
For every $\alpha_2\in(0,1)$ there exists $s_2\geq s+s_o$, depending only on the data and $\alpha_2$,
such that
\[
\frac{\nu\left(\brc{ (x,t) \in Q\l( (c_i/2)R_i^p, R_i\r) \ : \ u(x,t) > \mu_{i}^+ - 2^{-s_2} \omega_{i}  } \right)}{\nu\l(Q\l( (c_i/2) R_i^p,R_i\r)\r)} \leq \alpha_2.
\]
\begin{proof}

Consider the levels
\[
h=\mu_{i}^+- 2^{-(n+1)} \omega_{i}
\]
and
\[
k=\mu_{i}^+- 2^{-n} \omega_{i},
\]
where $n \geq s+s_o+1$ will be chosen large and $s$ is as in Lemma~\ref{logarithmic_bound}. Taking $B=B(R_i)$, using Lemma~\ref{logarithmic_bound} and the fact that $n\ge s+s_o+1$ we have for almost every $t\in(-(c_i/2) R_i^p,0)$
\begin{align*}
\mu(\{x\in B :  w(x,t)=0\})&=\mu(\{x\in B  :  u(x,t)\le k\})
\ge\frac{\alpha_0}{4}\mu(B),
\end{align*}
where
\[
w=
\begin{cases}
h-k,\quad &u\ge h, \\
u-k,\quad &k<u<h, \\
0, \quad &u\le k.
\end{cases}
\]
Thus, we obtain
\[
w_{B}(t)=\dashint_{B \times\{t\}} w \, d\mu \le \l(1-\frac{\alpha_0}{4}\r)(h-k)
\]
and, consequently,
\[
h-k-w_{B}(t) \ge \frac{\alpha_0}{4}(h-k),
\]
for almost every $t\in(-(c_i/2) R_i^p,0)$.
Let
\[
A_n (t) = \brc{ x \in B(R_i) \ : \ u(x,t) > \mu_{i}^+ - 2^{-n} \omega_{i}  }
\]
and
\[
A_n  = \brc{ (x,t) \in Q\l(\tfrac{c_i}{2}R_i^p, R_i\r) \ : \ u(x,t) > \mu_{i}^+ - 2^{-n} \omega_{i}  }.
\]
Using the $(q,q)$-Poincar\'e inequality for some $q<p$ (see~\eqref{poincare} and the subsequent remarks), yields
\begin{align*}
(h-k)^q\mu(A_{n+1}(t))&\le\l(\frac{4}{\alpha_0}\r)^q\int_{B\times\{t\}}|w-w_{B}(t)|^q\, d\mu \\
&\le
CR_i^q\int_{2B \times\{t\}} |\nabla w|^q \, d\mu \\
&= CR_i^q\int_{2B \times\{t\} \cap [k<u<h]} |\nabla u|^q \, d\mu,
\end{align*}
for almost every $t\in(-(c_i/2) R_i^p,0)$; and thereafter we integrate the above inequality over time to get
\[
(h-k)^q\nu(A_{n+1})\le C R_i^q\int_{ Q\l(c_iR_i^p, 2R_i\r)\cap [k<u<h] }|\nabla u|^q \varphi^p\, d\nu,
\]
$\varphi$ being a cutoff function in $ C^\infty_0\l(Q\l(c_iR_i^p, 2R_i\r)\r)$ such that
$0\le\varphi\le 1$, $\varphi=1$ in $Q\l((c_i/2)R_i^p, R_i\r)$, $\vp$ vanishes on the parabolic boundary of $Q\l(c_iR_i^p, 2R_i\r)$,
and
\[
|\nabla \varphi|\le\frac{C}{R_i} \quad\text{and}\quad \left(\frac{\partial \varphi}{\partial t}\right)_+\le \frac{C}{c_i R_i^p}.
\]
Now, H\"older's inequality and the doubling condition give
\begin{align}\label{eq:diff_sets}
& (h-k)^q\nu(A_{n+1}) \nonumber
\\ & \qquad  \le C\l(R_i^p \int_{Q(c_i R_i^p,2R_i)} |\nabla (u-k)_+|^p\varphi^p \, d\nu\r)^{q/p}\nu(A_{n}\setminus A_{n+1})^{1-q/p}.
\end{align}
The first factor on the right hand side can be estimated by Lemma~\ref{energy} written for the plus case as
\begin{equation}\nonumber
\begin{split}
&\int_{Q\l(c_iR_i^p,2R_i\r)} |\nabla (u-k)_+|^p\varphi^p \, d\nu \\
&\quad \le C\int_{Q\l(c_iR_i^p,2R_i\r)}\brc{(u-k)_+^p|\nabla\varphi |^p \,d\nu +\,k^{q-1} (u-k)_+^2 \left(\frac{\partial \varphi}{\partial t}\right)_+}\, d\nu \\
& \quad \le \frac{C}{R_i^p}\l( \l(\frac{\omega_{i}}{2^n}\r)^p+\l(\frac{\omega_{i}}{2}\r)^{q-1}\l(\frac{\omega_{i}}{2^n}\r)^2
\l(\frac{\omega_{i}}{2^{\lambda}}\r)^{p-2}\omega_i^{1-q}\r)\nu\left(Q\l(c_iR_i^p,2R_i\r)\right) \\
&\quad \le \frac{C}{R_i^p}\left(\frac{\omega_{i}}{2^n}\right)^p \l\{1+2^{(n-\lambda)(p-2)}\r\}\nu\left(Q\l(\tfrac{c_i}{2}R_i^p,R_i\r)\right)\\
&\quad \le \frac{C}{R_i^p}\left(\frac{\omega_{i}}{2^n}\right)^p \nu\left(Q\l(\tfrac{c_i}{2}R_i^p,R_i\r)\right).
\end{split}
\end{equation}
Above, in the second inequality we have used $(u-k)_+\leq \omega_i/2^n$ and $k\geq \omega_i/2$, in the third one we used the doubling property of the measure $\nu$ and in the forth one we assumed $\lambda>n$. Using this estimate in~\eqref{eq:diff_sets} we arrive at
\[
\l(\frac{\omega_{i}}{2^{n+1}}\r)^q
\nu(A_{n+1})\le C
\l(\frac{\omega_{i}}{2^n}\r)^q
\l[\nu\left(Q\l(\tfrac{c_i}{2}R_i^p,R_i\r)\right)\r]^{q/p}\nu(A_{n}\setminus A_{n+1})^{1-q/p}.
\]
Finally, summing $n$ over $s+s_o+1,\dots,s_2-1$ gives
\begin{align*}
(s_2-s-s_o)\nu(A_{s_2})^{p/(p-q)}
&\le C\l[\nu\left(Q\l(\tfrac{c_i}{2}R_i^p,R_i\r)\right)\r]^{q/(p-q)}\nu\left(Q\l(\tfrac{c_i}{2}R_i^p,R_i\r)\right)\\
&= C\l[\nu\left(Q\l(\tfrac{c_i}{2}R_i^p,R_i\r)\right)\r]^{p/(p-q)},
\end{align*}
and hence
\[
\nu(A_{s_2})\le \frac{C}{(s_2-s-s_o)^{(p-q)/p}}\nu\left(Q\l(\tfrac{c_i}{2}R_i^p,R_i\r)\right).
\]
Choosing $s_2$ large enough finishes the proof.
\end{proof}
\end{lemma}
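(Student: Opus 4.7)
The plan is to run a De Giorgi-type iteration in levels that approach $\mu_i^+$, combining the time-slice mass bound from Lemma~\ref{logarithmic_bound} with a $(q,q)$-Poincar\'e inequality and the plus-case energy estimate from Lemma~\ref{energy}. The key observation is that by Lemma~\ref{logarithmic_bound}, at almost every time in the slab of interest the set where $u$ is at most $\mu_i^+-2^{-(s+s_o+1)}\omega_i$ carries $\mu$-mass at least $(\alpha_0/4)\mu(B(R_i))$. This spatial bulk will produce, via Poincar\'e, the decay of the super-level-set measures in $n$, which is then summed over a telescoping range to force $\nu(A_{s_2})/\nu(Q)$ below any prescribed $\alpha_2$.

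More concretely, for $n\ge s+s_o+1$ I would set $k=\mu_i^+-2^{-n}\omega_i$, $h=\mu_i^+-2^{-(n+1)}\omega_i$, introduce the truncation $w=\min\{(u-k)_+,h-k\}$, and denote $A_n(t)=\{x\in B(R_i): u(x,t)>\mu_i^+-2^{-n}\omega_i\}$ and $A_n$ its parabolic counterpart in $Q((c_i/2)R_i^p,R_i)$. Lemma~\ref{logarithmic_bound} forces $w_B(t)\le(1-\alpha_0/4)(h-k)$, so on $A_{n+1}(t)$ we have $|w-w_B(t)|\ge(\alpha_0/4)(h-k)$. The $(q,q)$-Poincar\'e inequality (available by the remarks after \eqref{Sobolevzero}) then yields
\[
(h-k)^q\mu(A_{n+1}(t))\le CR_i^q\int_{B(R_i)\cap\{k<u<h\}}|\nabla u|^q\,d\mu,
\]
which after time integration and H\"older in the integrand produces
\[
(h-k)^q\nu(A_{n+1})\le CR_i^q\Bigl(\int_Q|\nabla(u-k)_+|^p\,d\nu\Bigr)^{q/p}\nu(A_n\setminus A_{n+1})^{1-q/p}.
\]
The $L^p$-gradient factor is then controlled by the plus-case energy estimate: since \eqref{eq:Case I test} gives $k\ge\omega_i/2$ and $(u-k)_+\le\omega_i/2^n$, the bound \eqref{plus_upper_estimate} for $\mathcal{J}$, together with the standard choice of cutoff on $Q(c_iR_i^p,2R_i)$ and the assumption $\lambda>n$ to absorb the time-derivative term, delivers
\[
\int_Q|\nabla(u-k)_+|^p\,d\nu\le \tfrac{C}{R_i^p}(\omega_i/2^n)^p\,\nu(Q).
\]
Combining everything and canceling the $(\omega_i/2^n)^q$ factors gives the recursive inequality $\nu(A_{n+1})^{p/(p-q)}\le C\,\nu(A_n\setminus A_{n+1})\,[\nu(Q)]^{q/(p-q)}$; summing over $n=s+s_o+1,\ldots,s_2-1$ and using the disjointness of the $A_n\setminus A_{n+1}$ yields $(s_2-s-s_o)\nu(A_{s_2})^{p/(p-q)}\le C[\nu(Q)]^{p/(p-q)}$, whence the lemma follows by taking $s_2$ large enough in terms of $\alpha_2$ and the data.

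The main obstacle I anticipate is calibrating the plus-case energy estimate against the singularity of $\mathcal{J}((u-k)_+)\sim qk^{q-1}(u-k)_+^2$: the factor $k^{q-1}$ is large when $k$ is small, and here the savior is precisely \eqref{eq:Case I test}, which guarantees $k\gtrsim\omega_i$ for the levels in play, so that the energy contribution of the singular time term is no worse than the intrinsic scale $c_i$ built into $Q$. A secondary technical point is balancing the parameter $\lambda$ (introduced in the choice of $c_i$) against the iteration index $n$ so that the term $2^{(n-\lambda)(p-2)}$ appearing from the time-derivative part of the energy estimate stays bounded by $1$; this is handled by requiring $\lambda$ to dominate all the levels used in the iteration, which is compatible with the earlier choices because $\lambda$ is still a free data-dependent parameter at this stage.
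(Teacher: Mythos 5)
Your proposal is correct and follows essentially the same route as the paper: the same levels $k=\mu_i^+-2^{-n}\omega_i$, $h=\mu_i^+-2^{-(n+1)}\omega_i$ with the truncation $w$, the time-slice bulk estimate from Lemma~\ref{logarithmic_bound} feeding a $(q,q)$-Poincar\'e inequality, the plus-case energy estimate with $k\ge\omega_i/2$ and the condition $\lambda>n$ to control the time-derivative term, and finally summation over the disjoint differences $A_n\setminus A_{n+1}$ to force $\nu(A_{s_2})$ small. Your closing remarks on the role of \eqref{eq:Case I test} and on deferring the choice of $\lambda$ (so that it dominates $s_2$) match exactly how the paper later fixes $\lambda=\max\{1+\tfrac{1}{p-2},s_2\}$.
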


Now we prove the main lemma of this alternative and along the proof we determine the lenght of the cylinder $Q^i$ by fixing $\lambda$ and consequently $c_i$.

\begin{lemma}
Assume \eqref{l:alt2} holds. Then the choice of $\lambda$ can be made so that
$$
u(x,t)\leq \mu_i^+ - \frac{\omega_i}{2^{\lambda+1}}, \qquad \text{a.e.} \quad \text{in} \quad \tfrac{1}{2}Q\l(\tfrac{c_i}{2}R_i^p,R_i\r).
$$
\begin{proof}
Define
\[
R_n=\frac{R_i}{2}+\frac{R_i}{2^{n+1}},
\qquad Q_{n}=Q\l(\tfrac{c_i}{2}R_n^p,R_n\r), \qquad B_n=B(R_n)
\]
and
\[
k_n = \mu_i^+ - \frac{\omega_i}{2^{\lambda+1}}\left(1 + \frac1{2^{n}}  \right)
\]
for $n=0,1,\dots$. Consider the energy estimate \eqref{l:energyest} written over the cylinders $Q_n$ and for the test functions $(u-k_n)_+ \varphi_n$, where $\varphi_n \in C_0^\infty(Q_n)$ vanishing on the parabolic boundary of $Q_n$ and such that $0\le\varphi_n\le 1$,
$\varphi_n=1$ in $Q_{n+1}$,
\begin{align}\label{gradient_estimates_3}
|\nabla \varphi_n|\le\frac{C2^{n}}{ R_i}
\quad\text{and}\quad \left(\frac{\partial \varphi_n}{\partial t}\right)_+\le \frac{C2^{pn}}{c_i R_i^p}.
\end{align}
Observe that when $(u-k_n)_+ \neq 0$,
$$\frac{\omega_i}{2}\leq \mu_i^+ - \frac{\omega_i}{2}\leq k_n \leq u\leq \mu_i^+\leq 2\omega_i $$
since $\mu_i^+=\mu_i^-+\omega_i$. Here the upper bound follows from assumption~\eqref{eq:Case I test}. Therefore the energy estimate reads as
\begin{align}\label{eq:energy_l3}
&\omega_i^{q-1}\esssup_{-\frac{c_i}{2}R_n^p<t<0}\dashint\limits_{B_n} \! (u{-}k_n)_+^{2}\varphi_n^{p}\,d\mu \,
+ \frac{c_i}{2}R_n^p\dashint\limits_{Q_n}\abs{\nabla(u{-}k_n)_+\varphi}^{p}\,d\nu \nonumber \\
&\leq C\,\frac{c_i}{2}R_n^p\brc{\dashint\limits_{Q_n}(u{-}k_n)_+^{p}\abs{\nabla \varphi_n}^{p}\,d\nu + C\omega_i^{q-1}\dashint\limits_{Q_n}(u{-}k_n)_+^2\varphi_n^{p-1}
\left(\frac{\partial \varphi_n}{\partial t}\right)_+\,d\nu} .
\end{align}
Using \eqref{gradient_estimates_3} and the estimate $(u-k_n)_+\leq2^{-\lambda}\omega_i$ we arrive at
$$
\esssup_{-\frac{c_i}{2}R_n^p<t<0} \,\dashint\limits_{B_n} \! (u{-}k_n)_+^{2}\varphi_n^{p}\,d\mu\leq C\,2^{np}\l(\frac{\omega_i}{2^\lambda}\r)^2 \frac{\nu\l(A_n\r)}{\nu\l(Q_n\r)}
$$
and
$$
R_n^p\dashint\limits_{Q_n}\abs{\nabla(u{-}k_n)_+\varphi}^{p}\,d\nu\leq  C\,2^{np}\l(\frac{\omega_i}{2^\lambda}\r)^p\frac{\nu\l(A_n\r)}{\nu\l(Q_n\r)},
$$
where $A_n=\left\{(x,t)\in Q_{n}:u(x,t)> k_n\right\}$.
Consider~\eqref{HoldSobo temp} and observe that it still holds for truncated functions $(u-k_{n})_+$ and for this sort of geometric setting. In a similar way to that in the proof of Lemma \ref{main_lemma1}, the left hand side can be estimated from below as
$$
\dashint_{Q_{n+1}}(u-k_{n})_+^{2(1-p/\kappa)+ p} \, d\nu  \geq  \l(\frac{\omega_i}{2^{\lambda+n+2}}\r)^{2(1-p/\kappa)+ p}\frac{\nu(A_{n+1})}{\nu(Q_{n+1})}.
$$
Substituting these last three estimates in~\eqref{HoldSobo temp}, written in terms of $(u-k_{n})_+$, we obtain
$$
\frac{\nu(A_{n+1})}{\nu(Q_{n+1})}\leq C^{n+1}\l(\frac{\nu\l(A_n\r)}{\nu\l(Q_n\r)}\r)^{2-p/\kappa}.
$$
By setting, as usual,
\[
Y_n = \frac{\nu(A_n)}{\nu(Q_n)}
\]
we find
\[
Y_{n+1} \le C^{n+1} Y_n^{2-p/\kappa}.
\]
Defining
$$\alpha_2 = C^{-1/(1-p/\kappa)+1-(1-p/\kappa)^2}$$
we conclude, using Lemma~\ref{geometric_convergence}, that if $Y_0\leq\alpha_2$ holds then $Y_n \to 0$, as $n\to \infty$. Now we set
\begin{equation}\label{eq:lambda chosen}
\lambda = \max\{1+\frac{1}{p-2},s_2\}.
\end{equation}
The choice trivially satisfies~\eqref{asp_lambda_1} (and $\lambda \geq s_2$)
and by the previous lemma we conclude that $\lambda$ can be chosen large enough so that $\nu(A_{0})/\nu(Q_{0})$ is as small as we please. Consequently, we reach the final result from the fact that $A_n\rightarrow0$ as $n\rightarrow0$.
\end{proof}
\end{lemma}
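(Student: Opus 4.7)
The plan is a standard De~Giorgi iteration in the plus case, adapted to the intrinsic geometry encoded by $c_i = \omega_i^{q-1}(\omega_i/2^\lambda)^{2-p}$, with the initial smallness fed by the previous lemma. Concretely, I would work on nested cylinders
$$R_n = \tfrac{R_i}{2} + \tfrac{R_i}{2^{n+1}}, \qquad Q_n = Q\l(\tfrac{c_i}{2}R_n^p, R_n\r),$$
increasing levels $k_n = \mu_i^+ - \tfrac{\omega_i}{2^{\lambda+1}}(1 + 2^{-n})$, and standard cutoff functions $\varphi_n$ vanishing on the parabolic boundary of $Q_n$, equal to $1$ on $Q_{n+1}$, and satisfying $|\nabla\varphi_n|\le C2^n/R_i$ and $(\partial_t\varphi_n)_+ \le C2^{pn}/(c_iR_i^p)$.

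The first substantive step is to check that the auxiliary function $\mathcal{J}((u{-}k_n)_+)$ is comparable to $\omega_i^{q-1}(u{-}k_n)_+^2$ on the support of $(u{-}k_n)_+$. Indeed, where $(u{-}k_n)_+>0$, one has $u\ge k_n$, and assumption \eqref{eq:Case I test} gives $\mu_i^+\ge\omega_i$ so that $k_n\ge \omega_i/2$; at the same time $u\le\mu_i^+\le 2\omega_i$. Combined with \eqref{plus_upper_estimate} and \eqref{plus_lower_estimate}, this yields
$$C^{-1}\,\omega_i^{q-1}(u{-}k_n)_+^2 \le \mathcal{J}((u{-}k_n)_+) \le C\,\omega_i^{q-1}(u{-}k_n)_+^2.$$
Applying the energy estimate of Lemma~\ref{energy} with test function $(u{-}k_n)_+\varphi_n$, and using $(u{-}k_n)_+ \le \omega_i/2^\lambda$ together with the cutoff bounds, one finds that the intrinsic choice of $c_i$ is precisely what makes the $|\nabla\varphi_n|^p$ term and the $(\partial_t\varphi_n)_+$ term comparable; both are controlled by $C 2^{pn} R_i^{-p} (\omega_i/2^\lambda)^p \,\nu(A_n)/\nu(Q_n)$, where $A_n=\{u>k_n\}\cap Q_n$. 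Dividing the supremum term by $\omega_i^{q-1}$ gives
$$\esssup_{t}\dashint_{B_n}(u{-}k_n)_+^2\varphi_n^p\,d\mu \le C2^{np}\l(\tfrac{\omega_i}{2^\lambda}\r)^2\frac{\nu(A_n)}{\nu(Q_n)}, \quad R_n^p\dashint_{Q_n}|\nabla((u{-}k_n)_+\varphi_n)|^p\,d\nu \le C2^{np}\l(\tfrac{\omega_i}{2^\lambda}\r)^p\frac{\nu(A_n)}{\nu(Q_n)}.$$

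Inserting these into the Hölder--Sobolev interpolation \eqref{HoldSobo temp}, which continues to hold in this plus-case geometry, and estimating the left-hand side from below by $(\omega_i/2^{\lambda+n+2})^{2(1-p/\kappa)+p}\,\nu(A_{n+1})/\nu(Q_{n+1})$ yields the De~Giorgi recursion
$$Y_{n+1} \le C^{n+1} Y_n^{2-p/\kappa}, \qquad Y_n := \frac{\nu(A_n)}{\nu(Q_n)}.$$
Lemma~\ref{geometric_convergence} then forces $Y_n\to 0$ provided $Y_0\le\alpha_2$ for a computable $\alpha_2$ depending only on the data, which is the sought conclusion.

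The main obstacle, and the only genuinely non-routine step, is securing the initial smallness $Y_0\le \alpha_2$. This is where $\lambda$ must do double duty: it shapes the intrinsic geometry through $c_i$, and it also controls how close to $\mu_i^+$ the level sets we are measuring sit. I would close the argument by setting $\lambda = \max\{\,1 + \tfrac{1}{p-2},\, s_2\,\}$, with $s_2$ furnished by the previous lemma. The first term guarantees \eqref{asp_lambda_1} so that the logarithmic forward-in-time estimate feeding into $s_2$ is applicable, and the second makes $Y_0$ arbitrarily small, triggering the iteration and completing the proof.
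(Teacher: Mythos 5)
Your proposal is correct and follows essentially the same route as the paper: the same nested cylinders $Q_n=Q\l(\tfrac{c_i}{2}R_n^p,R_n\r)$ and levels $k_n$, the two-sided bound $\omega_i/2\le k_n\le u\le 2\omega_i$ on the support of $(u-k_n)_+$ making $\mathcal{J}((u-k_n)_+)\sim\omega_i^{q-1}(u-k_n)_+^2$, the energy estimate combined with \eqref{HoldSobo temp} to get $Y_{n+1}\le C^{n+1}Y_n^{2-p/\kappa}$, and the choice $\lambda=\max\{1+\tfrac{1}{p-2},s_2\}$ so that \eqref{asp_lambda_1} holds and the previous lemma supplies $Y_0\le\alpha_2$. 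No substantive differences from the paper's argument.
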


We obtain the reduction of the oscillation of the solution for the second alternative as a corollary of the previous lemma.

\begin{corollary}\label{cor:Case II conclusion 1}
Assume that~\eqref{eq:u between mu(i)pm}
holds in $Q^i=Q(c_iR_i^p,R_i)$ and that~\eqref{eq:Case I test} is satisfied. Suppose further that \eqref{l:alt2} is verified. Then
there exists a constant $\sigma_{II} \in (3/4,1)$, depending only on the data, such that
\begin{equation}\label{shrink}
    \essosc_{\frac12 Q\l(\frac{c_i}{2}R_i^p, R_i\r)}{u} \leq \sigma_{II} \omega_{i}.
\end{equation}
\begin{proof}
By the result of the previous lemma we have
$$
\esssup_{\frac12 Q\l(\frac{c_i}{2}R_i^p, R_i\r)} u
\leq \mu_i^+-\frac{\omega_i}{2^{\lambda+1}}.
$$
Set
\[
\mu_{i+1}^+=\mu_i^+ -\frac{\omega_i}{2^{\lambda+1}}\qquad \textrm{and} \qquad \mu_{i+1}^-=\mu_i^-.
\]
Then we have
$$
\essosc_{\frac12 Q\l(\frac{c_i}{2}R_i^p, R_i\r)}{u} \leq \mu_{i+1}^+ - \mu_{i+1}^- = \l(1-\frac{1}{2^{\lambda+1}}\r) \omega_{i}
$$
and the proof is complete with $\sigma_{II}=1-\frac{1}{2^{\lambda+1}}\in (3/4,1)$.
\end{proof}
\end{corollary}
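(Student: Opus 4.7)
The plan is straightforward since this corollary is essentially a bookkeeping consequence of the preceding lemma, which provides the crucial pointwise upper bound on $u$ in a neighborhood of the origin under the second alternative. There is no real new analytic content to prove; the work is all absorbed in that lemma and in the logarithmic/De Giorgi machinery that preceded it.

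First, I would invoke the previous lemma directly to assert
\[
\esssup_{\frac12 Q\l(\frac{c_i}{2}R_i^p,R_i\r)} u \;\leq\; \mu_i^+ - \frac{\omega_i}{2^{\lambda+1}},
\]
with $\lambda$ chosen as in~\eqref{eq:lambda chosen}. Since the smaller cylinder $\tfrac12 Q(\tfrac{c_i}{2}R_i^p,R_i)$ is contained in $Q^i$, the hypothesis~\eqref{eq:u between mu(i)pm} gives the trivial companion bound
\[
\essinf_{\frac12 Q\l(\frac{c_i}{2}R_i^p,R_i\r)} u \;\geq\; \mu_i^-.
\]

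Next I would define the updated bounds
\[
\mu_{i+1}^+ := \mu_i^+ - \frac{\omega_i}{2^{\lambda+1}}, \qquad \mu_{i+1}^- := \mu_i^-,
\]
so that by the two displays above, the two-sided bound for the next step of the induction is satisfied on $\tfrac12 Q(\tfrac{c_i}{2}R_i^p,R_i)$. Subtracting,
\[
\essosc_{\frac12 Q\l(\frac{c_i}{2}R_i^p,R_i\r)} u \;\leq\; \mu_{i+1}^+ - \mu_{i+1}^- \;=\; \l(1 - \frac{1}{2^{\lambda+1}}\r)\omega_i.
\]

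Finally, setting $\sigma_{II} := 1 - 2^{-(\lambda+1)}$ yields~\eqref{shrink}. The fact that $\sigma_{II} \in (3/4,1)$ follows from $\lambda \geq 1 + 1/(p-2) > 1$ by~\eqref{eq:lambda chosen}, which forces $2^{-(\lambda+1)} < 1/4$. The main obstacle, already overcome in the preceding lemma, was the choice of $\lambda$ large enough (depending only on the data) to drive $Y_0 = \nu(A_0)/\nu(Q_0)$ below the geometric-convergence threshold $\alpha_2$; here we simply harvest that result, and no further quantitative work is required.
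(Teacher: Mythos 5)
Your proposal is correct and follows essentially the same route as the paper: invoke the previous lemma for the supremum bound, update $\mu_{i+1}^{\pm}$, and read off $\sigma_{II}=1-2^{-(\lambda+1)}$. Your added justification that $\lambda>1$ forces $\sigma_{II}\in(3/4,1)$ is a welcome small clarification the paper leaves implicit.
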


\subsection{The case when~\eqref{eq:Case I test} fails}\label{subsec3}

If assumption~\eqref{eq:Case I test} does not hold, which can happen only for an index $i_0\geq 1$ due to assumptions~\eqref{bounds_inf} and~\eqref{first_bounds}, then we have
\begin{equation}\label{cor:Case II test}
\quad 4\mu_{i_0}^- > \omega_{i_0}  \ \Longleftrightarrow \ \mu_{i_0}^+ < 5\mu_{i_0}^-.
\end{equation}
This elliptic Harnack estimate implies that if this condition holds for an index $i_0$ then it continues to hold for all indices $i\geq i_0$.
By means of \eqref{cor:Case II test} we obtain $(4\mu_{i_0}^-)^{q-1}\leq \omega_{i_0}^{q-1}$ and this implies
$$
Q\l((4\mu_{i_0}^-)^{q-1}\l(\frac{\omega_{i_0}}{2^\lambda}\r)^{2-p}R_{i_0}^p,R_{i_0}\r)\subset Q^{i_0}.
$$
By setting
$$
{\bar R_{i_0}}=\frac {R_{i_0}}{4^{(1-q)/p}}\leq R_{i_0}
$$
we have
$$
{\bar Q^{i_0}}:=Q\l((\mu_{i_0}^-)^{q-1}\l(\frac{\omega_{i_0}}{2^\lambda}\r)^{2-p}{\bar R_{i_0}^p},{\bar R_{i_0}}\r)\subset Q^{i_0}.
$$
In this case we work with the following scalings factors
$$
\bar c_{i_0}=(\mu_{i_0}^-)^{q-1}\l(\frac{\omega_{i_0}}{2^\lambda}\r)^{2-p} \qquad \text{and} \qquad \bar d_{i_0}=(\mu_{i_0}^-)^{q-1}\l(\frac{\omega_{i_0}}{2}\r)^{2-p}
$$
and proceed in the same way as was done before - the only significant change appears in the proof of Lemma~\ref{main_lemma1}. Now, instead of \eqref{minus_upper_estimate}, we are allowed to use the more favorable estimate
\begin{equation*}
\begin{split}
\mathcal{J}((u{-}k)_-) &=q\int_0^{(u{-}k)_-}(k-\xi)^{q-1}\xi \, d\xi\\
&\le q u^{q-1}\int_0^{(u{-}k)_-}\xi\, d\xi \\
&\le q u^{q-1}\frac{(u{-}k)_-^2}{2}.
\end{split}
\end{equation*}
Observe that this estimate can be used in this case because, thanks to the assumption \eqref{cor:Case II test}, we have $u\geq\mu_i^->\omega_i/4$, for $i\geq i_0$, which implies that solution $u$ is strictly away from zero.

\subsection{Proving Theorem~\ref{main_theorem}}\label{subsec4}

We finally prove the H\"older continuity of $u$ presented in Theorem~\ref{main_theorem}  which is an immediate consequence of the following theorem.

\begin{theorem}\label{impliesholder}
Suppose that $u$ is a nonnegative weak solution of  equation~\eqref{equation} in $Q_{x,t}(R^2,2R)$. Then there are
positive constants $C$ and $\alpha$, both depending only on the data, such that
\[
\essosc_{Q_{x,t}(\varrho^p,\varrho)}{u} \leq C \left(\frac{\varrho}{R} \right)^{\alpha} \omega,
\]
for all $0<\varrho<R$.
\begin{proof}
After translation, we may assume that $(x,t) \equiv (0,0)$.
Take $\sigma := \max\{\sigma_I,\sigma_{II}\}$ and $\delta := \sigma^{(p-q-1)/p}2^{-\l(3+(\lambda-1)(p-2)/p\r)}$. Observe that this choice of $\delta$ gives
$$
c_{i+1}\l(\delta R_i\r)^p\leq d_i\l(\frac{R_i}{8}\r)^p \leq \frac{c_i}{2}\l(\frac{R_i}{2}\r)^p.
$$
Then, by Corollaries~\ref{cor:Case I conclusion} and~\ref{cor:Case II conclusion 1} we have
\[
\essosc_{Q(c_{i+1}(\delta^{i+1} R)^p ,\delta^{i+1} R)}{u} \leq \omega_{i+1} := \sigma^{i+1} \omega_0=\sigma^{i+1} \omega, \qquad i=0,1,\ldots.
\]
Since $c_{i} \geq 1$ for all $i=0,1,\dots$,  we finally obtain
\[
\essosc_{Q((\delta^{i} R)^p , \delta^i R)}{u} \leq  \sigma^{i} \omega, \qquad i=0,1,2,\ldots.
\]
Let $0<\varrho<R$. Then there exists $i \in \mathbb{N}$ such that $\delta^{i+1} R \leq \varrho \leq \delta^{i} R$. Take $$\alpha = \dfrac{\ln \sigma}{\ln \delta} \ .$$
Therefore, we get
\begin{eqnarray*}
\essosc_{Q((\delta^{i} R)^p , \delta^i R)}{u} & \leq &  \sigma^{i} \omega = \dfrac{1}{\sigma} \sigma^{i+1} \omega\\
& = & C \left(\delta^{i+1}\right)^\alpha \omega \ , \quad \mbox{for} \ C=1/\sigma\\
& \leq & C  \left(\frac{\varrho}{R} \right)^{\alpha} \omega.
 \end{eqnarray*}
This concludes the proof.
\end{proof}
\end{theorem}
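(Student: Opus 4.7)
The plan is to iterate the oscillation reductions provided by Corollaries~\ref{cor:Case I conclusion} and~\ref{cor:Case II conclusion 1} (together with the parallel construction sketched in Subsection~\ref{subsec3} when \eqref{eq:Case I test} fails) on the nested family $Q^i=Q(c_iR_i^p,R_i)$, and then convert the resulting geometric decay of the oscillation into a H\"older estimate by the usual interpolation argument on radii.

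First, after translating so that $(x,t)=(0,0)$, I fix $\sigma:=\max\{\sigma_I,\sigma_{II}\}\in(3/4,1)$, so that in either alternative we obtain
\[
\essosc_{Q'_i} u\le \sigma\,\omega_i,
\]
where $Q'_i$ is the smaller cylinder appearing in the conclusion of the relevant corollary, namely either $\tfrac18 Q(d_iR_i^p,R_i)$ or $\tfrac12 Q(\tfrac{c_i}{2}R_i^p,R_i)$. The key design choice is the scaling factor $\delta$: I want $\delta$ small enough that
\[
Q^{i+1}=Q(c_{i+1}(\delta R_i)^p,\delta R_i)\subset Q'_i,
\]
so that setting $\omega_{i+1}:=\sigma\omega_i$ together with the updated $\mu_{i+1}^{\pm}$ propagates the inductive hypothesis \eqref{eq:u between mu(i)pm} with the correct relation $\omega_{i+1}=\sigma\omega_i$. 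Since $c_{i+1}/c_i = \sigma^{(p-1-q)}$ (because $c_i=\omega_i^{q-1}(\omega_i/2^\lambda)^{2-p}$ and $\omega_{i+1}=\sigma\omega_i$), a direct computation shows the choice
\[
\delta := \sigma^{(p-q-1)/p}\,2^{-\bigl(3+(\lambda-1)(p-2)/p\bigr)}
\]
already announced before Lemma~\ref{energy} guarantees both
\[
c_{i+1}(\delta R_i)^p\le d_i\left(\frac{R_i}{8}\right)^p
\quad\text{and}\quad
c_{i+1}(\delta R_i)^p\le \frac{c_i}{2}\left(\frac{R_i}{2}\right)^p,
\]
which gives the required inclusion in either alternative. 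In the degenerate case of Subsection~\ref{subsec3} I replace $c_i$ by $\bar c_{i_0}$ and $R_i$ by $\bar R_{i_0}$; the same inclusion argument goes through.

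With the induction set up, I obtain
\[
\essosc_{Q(c_{i+1}(\delta^{i+1}R)^p,\delta^{i+1}R)} u \le \omega_{i+1}=\sigma^{i+1}\omega,\qquad i=0,1,2,\ldots.
\]
Since $\lambda$ has been fixed as in~\eqref{eq:lambda chosen} and $\omega_i\le\omega\le 1$ forces $c_i\ge 1$ for every $i$, the standard cylinder $Q((\delta^iR)^p,\delta^iR)$ is contained in the intrinsic cylinder $Q(c_i(\delta^iR)^p,\delta^iR)$, so the same estimate $\essosc_{Q((\delta^iR)^p,\delta^iR)}u\le \sigma^i\omega$ holds.

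Finally, given $0<\varrho<R$, I pick $i\in\mathbb{N}$ with $\delta^{i+1}R\le\varrho\le\delta^iR$ and set $\alpha:=\ln\sigma/\ln\delta>0$. Then
\[
\essosc_{Q(\varrho^p,\varrho)} u\le \essosc_{Q((\delta^iR)^p,\delta^iR)} u \le \sigma^i\omega = \sigma^{-1}\sigma^{i+1}\omega = \sigma^{-1}\delta^{(i+1)\alpha}\omega\le C\left(\frac{\varrho}{R}\right)^\alpha\omega,
\]
with $C=1/\sigma$, which yields the stated H\"older estimate. The main delicate point, and the only one that is not entirely mechanical, is the verification of the cylinder inclusion $Q^{i+1}\subset Q'_i$: the scaling $c_i$ itself depends on $\omega_i$, so shrinking the spatial radius by a single factor $\delta$ forces a simultaneous constraint coming from both the temporal length $c_{i+1}R_{i+1}^p$ and from the smaller subcylinder sizes $d_iR_i^p/8^p$ and $c_iR_i^p/2^{p+1}$; the explicit choice of $\delta$ above is precisely what balances these two constraints uniformly in $i$.
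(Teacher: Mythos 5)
Your proposal is correct and follows essentially the same route as the paper: the same $\sigma:=\max\{\sigma_I,\sigma_{II}\}$, the same choice of $\delta$, the same cylinder-inclusion chain $c_{i+1}(\delta R_i)^p\le d_i\left(\tfrac{R_i}{8}\right)^p\le \tfrac{c_i}{2}\left(\tfrac{R_i}{2}\right)^p$, the observation $c_i\ge 1$, and the same interpolation in $\varrho$ giving $\alpha=\ln\sigma/\ln\delta$ and $C=1/\sigma$. Only a minor sign slip: since $c_i=2^{\lambda(p-2)}\omega_i^{-(p-1-q)}$ and $\omega_{i+1}=\sigma\omega_i$, the ratio is $c_{i+1}/c_i=\sigma^{-(p-1-q)}$ (not $\sigma^{p-1-q}$), which is precisely what the factor $\sigma^{(p-q-1)/p}$ in $\delta$ is there to absorb, so your verification of the inclusions goes through unchanged.
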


\bibliographystyle{plain}

\def\cprime{$'$} \def\cprime{$'$}

\bigskip
\noindent Addresses:

\noindent E.H. \& R.L. : Departamento de Matem\'{a}tica, Universidade de Tr\'as-os- Montes e Alto Douro,
Ap. 1013, 5001-801 Vila Real, Portugal. \\
\noindent
E-mail (E.H.): {\tt eurica@utad.pt}

\noindent
E-mail (R.L.): {\tt rojbin@utad.pt}\\

\end{document}